\newcommand\R{\mathbb{R}}
\newcommand\norm[2][]{{\left\lVert#2\right\rVert_{#1}}}
\newtheorem{theo}{Theorem}
\newtheorem{lemma}[theo]{Lemma}
\newtheorem{pr}[theo]{Proposition}
\newtheorem{ex}[theo]{Example}
\newtheorem{assumption}[theo]{Assumption}
\def\.{{\;}}
\begin{document}
\setcounter{footnote}{1}

\title{A note on uniqueness in the identification of a spacewise dependent source and diffusion coefficient for the heat equation}

\author{
A.~De Cezaro %
\thanks{Institute of Mathematics Statistics and Physics,
Federal University of Rio Grande, Av. Italia km 8, 96201-900 Rio
Grande, Brazil (\href{mailto:decezaro@impa.br}{\tt
decezaro@impa.br}).} \ \ and \ \
B. T. Johansson %
\thanks{School of Mathematics, University of Birmingham,
Edgbaston, Birmingham B15 2TT, UK (\href{mailto:b.t.johansson@bham.ac.uk}{\tt
b.t.johansson@bham.ac.uk}).}
}

\date{}

\maketitle
\begin{small}

We investigate uniqueness in the inverse problem of reconstructing
simultaneously a spacewise conductivity function and a heat source
in the parabolic heat equation from the usual conditions of the
direct problem and additional information from a supplementary
temperature measurement at a given single instant of time. In the
multi-dimensional case, we use Carleman estimates for parabolic
equations to obtain a uniqueness result. The given data and the
solution domain are sufficiently smooth such that the required norms
and derivatives of the conductivity, source and solution of the
parabolic heat equation exist and are continuous throughout the
solution domain. These assumptions can be further relaxed using more
involved estimates and techniques but these lengthy details are not
included. Instead, in the special case of the one-dimensional heat
equation, we give an alternative and rather straightforward proof of
uniqueness for the inverse problem, based on integral
representations of the solution together with density results for
solutions of the corresponding adjoint problem. In this case, the
required regularity conditions on the conductivity, source and the
solution of the parabolic heat equation are weakened to classes of
integrable functions.

\vskip 0.5 cm
\textbf{Keywords:} uniqueness; spacewise conductivity and source;
final time measurements; heat equation; Carleman estimates.
\vskip 0.5 cm
\textbf{AMS Subject Classification:} 35R30; 35A02; 65M32; 65L09;
74F05; 35K05
\end{small}
\section{Introduction} \label{sec:1}

Inverse problems of parameter identification in partial differential
equations have several important applications including thermal
prospection of material and bodies, hydraulic prospecting of soil,
photonic detection of cancer, finding pollution sources, see, for
example, \cite{EngHanNeu96, Isa90, Isa06, IK2000, Ivanchov2003,
Poisson2008, Vessella2008, Yamamoto2009} and references therein. For
inverse problems in general it is important to find and specify
appropriate data such that the set of parameters to be reconstructed
are uniquely identifiable. We shall consider an inverse problem for
the parabolic heat equation, where the additional data is
information about the solution obtained from a spacewise measurement
at the final time. To be more specific, we shall show uniqueness of
the identifiability of a pair of functions $(a(x), f(x))$,
representing the spacewise thermal conductivity and the heat source,
in the parabolic heat equation
\begin{align}\label{eq:model2}
u_t - \nabla \cdot (a(x) \nabla u) & = f(x) \,\, \mbox{in } \ \Omega \times (0,T)\nonumber\\
        u(x,t) & = 0    \,\, \mbox{for} \,\, (x, t) \in \partial \Omega
        \times (0,T)\\
        u(x,0) & = h(x) \,\, \mbox{for }  x \in \Omega,\nonumber
\end{align}
for a given initial temperature $h$, with the additional temperature
measurement $g$ at time $t = T$, i.e.
\begin{align}\label{eq:ad-measurement}
u(x,T) = g(x) \,\, \mbox{for } x \in \Omega\,,\quad T > 0\,.
\end{align}

We assume that $a$ and $f$ are spacewise dependent real-valued
functions and that there exists $\underbar{a} > 0$ such that $ a(x)
\geq \underbar{a}$ for all $x \in \Omega$. This implies that the
operator $L u = - \nabla \cdot (a(x) \nabla u)$ is elliptic. For the
moment we assume that the coefficient, the initial and final
conditions are sufficiently smooth such that there exist a unique
classical solution $u(x,t)$ of the problem \eqref{eq:model2} and
that the required compatibility conditions are satisfied.  For the
precise statement of smoothness and other assumptions to guarantee
the existence of such a classical solution of the direct
problem~(\ref{eq:model2}), see \cite[Theorem 5.2]{LandSolUr-1968}.
Regularity conditions will be further discussed in
Section~\ref{section:local_uniqueness} and in the
Appendix~\ref{appendix}.

There are many contributions on uniqueness for the identifiability of coefficients
in parabolic type equations in the case of lateral
overdetermination. Since the literature on this subject is vast
we suggest the reader to consult \cite{Isa06, IK2000,
KlibanovTimonov2004, Poisson2008, Yamamoto2009} and references
therein.

On the other hand, there are only a few papers related to the
inverse problem of spacewise coefficient identification with given
additional measurements at the final time. Indeed, uniqueness from
final time data for a spacewise dependent heat source was shown in
\cite{Rundel80} and simultaneous determination of a heat source and
initial data from spacewise measurements was investigated in
\cite{JohanssonLesnic2008}. Uniqueness from final time data for the
identification of diffusion coefficients was shown in a recent paper
\cite{Goldman}, for the one-dimensional heat equation. We point out
that the arguments proving uniqueness for the one-dimensional
setting \cite{Goldman} appear not possible to extend to the
multi-dimensional case.

In \cite{Yamamoto2009}, Carleman type estimates were used to prove
uniqueness and stability of a sufficiently smooth diffusion
coefficient in the heat equation, with measurements given at an
intermediate time. However, to the authors' knowledge there are no
results on uniqueness for the reconstruction of both a spacewise
dependent conductivity and heat source. Thus, we shall state and
prove  such a uniqueness result building on Carleman estimates
in~\cite{Yamamoto2009}. For the ease of presentation and to
highlight the usefulness of Carleman estimates for the inverse
problem \eqref{eq:model2}--\eqref{eq:ad-measurement}, we shall
simplify the proof using smoothness assumptions together with some
a priori knowledge of the conductivity close to the boundary of the
solution domain. It is conjectured that these smoothness assumptions
can be removed but we only indicate possible generalizations rather
than give full lengthy and complex details. To convince the reader
that it is possible to have uniqueness also in spaces of integrable
functions, we give a rather straightforward proof of uniqueness in
the case of a one-dimensional solution domain, where the smoothness
assumptions are relaxed and more general. This proof is based on a
recent work~\cite[Section 5]{Goldman} and involves integral
relations obtained from Green's formula for the solution together with some denseness properties of the
adjoint (heat) equation. This technique does not appear possible
though to generalize to higher dimensions.

We point out that once uniqueness is shown for
smooth data, one can use standard approximation techniques together
with stability results for parabolic equations to get a result for
also for non-smooth data.

Note that it is crucial to have only spacewise dependence in the
heat conductivity and source term; there are examples showing
non-uniqueness of the reconstruction of the heat source in the case
of time-dependent sources, see \cite{AlessandriniEscouriaza2008,
Goldman2011, Isakov91}. A simple counterexample to the uniqueness in
the case of a time-dependent heat conductivity is presented in
\cite[Section 5]{Goldman}.

We point out that an interesting related problem that we do not
explore further in this contribution is conditional
stability estimates for the above inverse problem
\eqref{eq:model2}--\eqref{eq:ad-measurement}. In general, for an
inverse problem, in spite of the ill-posedness conditional stability
estimates assure that one can restore the stability of the requested
(physical) parameters provided they are restricted to some class
within an a priori bounded set. The conditional stability is not
only of theoretical interest but is also of importance for the
construction of numerically stable solutions. For example, in
\cite{ChengYamamoto2000} a stability estimate gives convergence
rates for certain Tikhonov regularized solutions. There are several
methods in the literature for proving conditional stability, see
\cite{Isa06, Yamamoto2009}. The method of Carleman estimates is one
possibility for obtaining conditional stability. Thus, it is
possible to obtain such stability based on the results presented in
the present paper. We do not go into the details here, it is
deferred to future work. The reader can further consult \cite{Isa06,
Poisson2008, Yamamoto2009} and the references therein.

For the outline of this paper, we show a uniqueness result for the
above inverse problem using Carleman estimates, see
Section~\ref{section:local_uniqueness} and Theorem 4. In
Section~\ref{section:global_uniqueness}, we address global
uniqueness for the identification of the heat source and
conductivity in the one-dimensional case, using density arguments
for solutions of the corresponding adjoint problem. The arguments in
Section~\ref{section:global_uniqueness} weakens the smoothness
assumptions of the spacewise source and coefficients in the inverse
problem \eqref{eq:model2}--\eqref{eq:ad-measurement}. In the final
section, we draw some conclusions and discuss some possible
generalization of the presented uniqueness results. For the sake of
completeness, in the Appendix~\ref{appendix}, we prove that for a
given class of initial data, the solution of the heat equation at
the final time satisfies the required assumptions stated in
Section~\ref{section:local_uniqueness}.

\paragraph{Notation:} We finish the introduction stating some
notation that we use: The set $\Omega$ is an
open and bounded subset of $\R^n$, with the boundary $\partial \Omega$ being at least
Lipschitz smooth. By $L^p(\Omega)$ for $1\leq p < \infty$, we denote the
usual space of $p$-integrable functions on $\Omega$ with the usual
norm $\norm{\cdot}_{L^p(\Omega)}$. The space $L^\infty(\Omega)$ is
the standard $L^\infty$-space. We denote by $W^{k,p}(\Omega)$ the
standard Sobolev space on $\Omega$ with generalized derivatives of
order $\leq k$ in $L^p(\Omega)$. In particular, for $p=2$ we have
the Hilbert spaces $H^k(\Omega)$. Moreover, since $\partial \Omega$
is Lipschitz, the trace of a function in $H^1(\Omega)$ to the
boundary is well-defined.

Let $T > 0$ be fixed and define the measurable function $u(\cdot,
t)\,:\; (0,T) \longrightarrow X$, where $X$ is a Banach space. We
denote by $C([0,T]; X)$ the space of continuous mappings $u(\cdot,
t)$ with the usual norm and by $L^2((0,T);X)$ the space of functions
such that
\begin{align*}
\int_0^T \norm{u(\cdot, t)}^2_{X} \,dt < \infty\,.
\end{align*}

We also assume that $a$ and $f$ are sufficiently regular spacewise
real-valued functions with $ 0 < \underline{a} \leq a(x)$ for every
$x \in \Omega$. Therefore, the differential operator
\begin{align*}
L u = \nabla (a(x) \nabla u)
\end{align*}
is elliptic. Since, the operator $-L$ generate a contraction
semigroup, there exists a unique solution $u$ of
\eqref{eq:model2} with
\begin{align*}
\norm{u}_1 = \int_0^T\left(\norm{u_t(\cdot,t)}^2_{L^2(\Omega)} +
\norm{u(\cdot,t)}^2_{H^2(\Omega)} \right) \,dt < \infty\,.
\end{align*}

\section{Local uniqueness for the spacewise source and heat
conductivity in (1)--(2)}\label{section:local_uniqueness}

As mentioned, we do not strive to obtain the most general
result, but we only wish to convince the reader that there can
be at most one spacewise dependent heat conductivity coefficient and one
spacewise dependent heat source that together satisfy the given final time data, i.e. that solve the inverse problem \eqref{eq:model2}--\eqref{eq:ad-measurement}.
Therefore, we assume that the given data and the solution domain are sufficiently smooth
such that the required norms and derivatives of the coefficients,
sources and solution of \eqref{eq:model2}--\eqref{eq:ad-measurement}
exist and are continuous throughout the solution domain. For the
precise statement of smoothness and compatibility conditions for the parabolic heat equation, see
\cite[Theorem 5.2]{LandSolUr-1968}. Once uniqueness is shown for
smooth data, one can use standard approximation techniques together
with stability results for parabolic equations to get a result for
non-smooth data. Note that there are now many solvability results
and estimates for parabolic equations with very general
coefficients, see further \cite{LandSolUr-1968, Krylov2007}.

For our proof of uniqueness we use local Carleman estimates and
follow \cite[Section 6]{Yamamoto2009}. To simplify the
presentation further and to avoid cut-off functions and global
Carleman estimates, we assume that the diffusion coefficient is
known in a region $\Omega - \overline{D}$, where $\overline{D}
\subset \subset \Omega$, and $ 0 < |D| < \infty$. In other words,

\begin{assumption}\label{ass:a}
The diffusion coefficient $a(x)$ in\/~\eqref{eq:model2} is
known for every $x \in \Omega \setminus \overline{D}$, where $D
\subset  \Omega$ with $\partial D$ sufficiently smooth and $d(D,
\partial \Omega) > \gamma > 0$.
\end{assumption}
This is a reasonable assumption in applications, since the material
(body) $\Omega$ might be coated or layered and the physical properties of the outer layer is
known. The above assumption forces any two solutions of the inverse problem \eqref{eq:model2}--\eqref{eq:ad-measurement} to be equal in
$\Omega \setminus \overline{D}$, i.e. the difference has compact support
in $\Omega$. More precisely, we have the following lemma:
\begin{lemma}\label{lemma:compact_support}
Let $u = u(a, f_1)$ and $v =v(b, f_2)$ be solutions of
\eqref{eq:model2}--\eqref{eq:ad-measurement}, with spacewise heat
conductivities $a$ and $b$ and spacewise heat sources $f_1$ and $f_2$,
respectively. Assume that $a = b$ in $\Omega - \overline{D}$. Then
the difference $ w = u - v $ is such that $w = 0$ in $\Omega \setminus
\overline{D}$. Moreover $f_1 = f_2$ in $\Omega - \overline{D}$.
\end{lemma}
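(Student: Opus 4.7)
The plan is to pass to the difference $w=u-v$, exploit that the coefficient equation collapses on $\Omega\setminus\overline{D}$ under Assumption~\ref{ass:a}, and then apply a Rundell-type source-uniqueness argument based on the two time-slice conditions $w(\cdot,0)=0$ and $w(\cdot,T)=0$.

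First I would subtract the two direct problems and rewrite the difference in the convenient form
\begin{equation*}
w_t - \nabla\cdot(a\nabla w) \;=\; \nabla\cdot\bigl((a-b)\nabla v\bigr) + (f_1-f_2) \quad\text{in } \Omega\times(0,T),
\end{equation*}
with $w=0$ on $\partial\Omega\times(0,T)$ and $w(\cdot,0)=w(\cdot,T)=0$ in $\Omega$ (the initial condition because $u,v$ share the datum $h$; the terminal one because both match the measurement~\eqref{eq:ad-measurement}). Since Assumption~\ref{ass:a} forces $a-b\equiv 0$ on $\Omega\setminus\overline{D}$, the equation reduces on this subdomain to
\begin{equation*}
w_t - \nabla\cdot(a\nabla w) \;=\; f_1 - f_2,
\end{equation*}
with a \emph{time-independent} right-hand side.

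Next, following Rundell's classical trick, I would set $W:=w_t$. Differentiating the reduced equation in $t$ shows that $W$ solves the homogeneous parabolic equation $W_t-\nabla\cdot(a\nabla W)=0$ on $(\Omega\setminus\overline{D})\times(0,T)$, with $W=0$ on $\partial\Omega$. Evaluating the reduced equation at $t=0$ and $t=T$, and using $w(\cdot,0)=w(\cdot,T)=0$, yields
\begin{equation*}
W(\cdot,0) \;=\; W(\cdot,T) \;=\; f_1-f_2 \quad\text{on } \Omega\setminus\overline{D}.
\end{equation*}
A nontrivial time-independent source would drive $W$ strictly through the dissipative semigroup generated by $-\nabla\cdot(a\nabla\cdot)$, whose spectrum is strictly positive; the ``temporal periodicity'' $W(\cdot,0)=W(\cdot,T)$ therefore forces $f_1-f_2\equiv 0$ and $W\equiv 0$ on $\Omega\setminus\overline{D}$. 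Since $w_t\equiv 0$ there and $w(\cdot,0)=0$, one concludes $w\equiv 0$ on $\Omega\setminus\overline{D}$, and substitution back into the reduced PDE confirms $f_1=f_2$ there.

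The main obstacle is making the semigroup/eigenfunction step rigorous on the subdomain $\Omega\setminus\overline{D}$, whose boundary has the additional component $\partial D$ on which $W$ is not a priori zero, so the classical eigenbasis argument does not close by itself. The gap would be bridged by propagating the vanishing of $W$ from $\partial\Omega$ inward toward $\partial D$ via a local parabolic unique-continuation argument of Carleman type in the spirit of~\cite{Yamamoto2009}; the separation condition $d(D,\partial\Omega)>\gamma>0$ built into Assumption~\ref{ass:a} is precisely what guarantees the interior regularity and the buffer region needed for this propagation.
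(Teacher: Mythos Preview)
Your reduction to $w_t-\nabla\cdot(a\nabla w)=f_1-f_2$ on $(\Omega\setminus\overline{D})\times(0,T)$ and the Rundell-type differentiation $W:=w_t$ are correct, and you have rightly pinpointed the obstruction: on the inner boundary $\partial D$ nothing is known about $w$ (or $W$). However, your proposed fix via parabolic unique continuation from $\partial\Omega$ does not close the gap. Carleman-type lateral unique continuation requires \emph{Cauchy} data (both Dirichlet and Neumann traces) on $\partial\Omega\times(0,T)$, or vanishing on an open set; here you only have the single Dirichlet condition $W=0$ on $\partial\Omega\times(0,T)$, which comes for free from the original boundary condition. With unknown data on $\partial D$ and only Dirichlet data on $\partial\Omega$, the homogeneous parabolic equation on the annular region $\Omega\setminus\overline{D}$ has plenty of nontrivial solutions, so vanishing cannot be propagated inward. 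The separation condition $d(D,\partial\Omega)>\gamma$ does not supply the missing Neumann trace; it is used later in the paper for an entirely different purpose.

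The paper handles the inner boundary by a decomposition rather than by unique continuation: one writes $w=w^{(1)}+w^{(2)}$ on $(\Omega\setminus\overline{D})\times(0,T)$, where $w^{(1)}$ keeps the source $f_1-f_2$ but is given \emph{artificial} homogeneous Dirichlet data on $\partial D$ (so that Rundell's theorem~\cite{Rundel80} applies on the full subdomain with complete boundary conditions and forces $f_1-f_2\equiv0$, hence $w^{(1)}\equiv0$), while $w^{(2)}$ absorbs the unknown trace $g_0:=w|_{\partial D}$, has zero source, zero data on $\partial\Omega$, and zero initial and terminal values. The second piece is then eliminated not by unique continuation but by a positivity/maximum-principle argument: if $g_0$ were positive somewhere on $\partial D\times(0,T)$, then by~\cite[Theorem~9.2]{Isa90} one would have $w^{(2)}(\cdot,s)>0$ at nearby interior points for all later times $s\le T$, contradicting $w^{(2)}(\cdot,T)=0$. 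This splitting is the missing idea in your argument; once it is in place, your periodicity/semigroup step is exactly Rundell's theorem applied to $w^{(1)}$, and no Carleman estimate is needed in this lemma.
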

\begin{proof}
By linearity of \eqref{eq:model2}, $w$ satisfies
\begin{align}\label{eq:model2.1}
w_t - \nabla \cdot (a(x) \nabla w) & =  \nabla \cdot ((a-b) \nabla v) +  (f_1 - f_2)(x) \,\, \mbox{in } \ \Omega \times (0,T)\nonumber\\
        w(x,t) & =  0   \,\, \mbox{for} \,\, (x, t) \in \partial \Omega \times (0,T)\\
        w(x,0) & = w(x,T) = 0 \,\, \mbox{for }  x \in \Omega. \,\nonumber
\end{align}
From the assumption that $a = b$ in $\Omega - \overline {D}$ we have,
\begin{align}\label{eq:model2.1.0}
w_t - \nabla \cdot (a(x) \nabla w) & =  (f_1 - f_2)(x) \,\, \mbox{in } \, \Omega - \overline{D} \times (0,T)\nonumber\\
        w(x,t) & =  0   \,\, \mbox{for} \,\, (x, t) \in \partial \Omega \times (0,T)\\
        w(x,t) & = g_0(x) \,\, \mbox{for} \,\, (x, t) \in \partial D \times (0,T)\nonumber \\
        w(x,0) & = w(x,T) = 0 \,\, \mbox{for }  x \in \Omega.\nonumber
\end{align}
Therefore, using standard parabolic theory we conclude that there
exist only one $w \in C([0,T], H_0^1(\Omega) \times H^2(\Omega)$
solution of \eqref{eq:model2.1.0}.

Let we define the following problems
\begin{align}\label{eq:model2.1.1}
w^{(1)}_t - \nabla \cdot (a(x) \nabla w^{(1)}) & =  (f_1 - f_2)(x) \,\, \mbox{in } \, \Omega - \overline{D} \times (0,T)\nonumber\\
        w^{(1)}(x,t) & =  0   \,\, \mbox{for} \,\, (x, t) \in \partial \Omega \times (0,T)\\
        w^{(1)}(x,t)& = 0 \,\, \mbox{for} \,\, (x, t) \in \partial D \times (0,T)\nonumber \\
        w^{(1)}(x,0) & = w^{(1)}(x,T) = 0 \,\, \mbox{for }  x \in \Omega, \nonumber
\end{align}
Since the source is in $L^2(\Omega)$, we have  from \cite[Theorem
2]{Rundel80} that there exist a unique solution $w^{(1)}$ of
\eqref{eq:model2.1.1}.

Therefore, taking the difference $w^{(2)}=w - w^{(1)}$, we find that
$w^{(2)}$ exists and satisfies
\begin{align}\label{eq:model2.1.2}
w^{(2)}_t - \nabla \cdot (a(x) \nabla w^{(2)}) & =  0 \,\, \mbox{in } \, \Omega - \overline{D} \times (0,T)\nonumber\\
        w^{(2)}(x,t) & =  0   \,\, \mbox{for} \,\, (x, t) \in \partial \Omega \times (0,T)\\
        w^{(2)}(x,t) & = g_0(x,t) \,\, \mbox{for} \,\, (x, t) \in \partial D \times (0,T)\nonumber\\
        w^{(2)}(x,0) & = w^{(2)}(x,T) = 0 \,\, \mbox{for }  x \in \Omega .\nonumber
\end{align}


We start by considering the solution $w^{(1)}$
of~\eqref{eq:model2.1.1}. From~\cite[Theorem 1]{Rundel80} given a
final condition at time $T > 0$ there is at most one spacewise
dependent source giving rise to this final time value for the heat
equation with homogeneous boundary and initial condition. Thus,
since $w^{(1)}$ is zero at $t=T$ and has homogeneous boundary and
initial condition we conclude that $f_1-f_2=0$, this in turn implies
that $w^{(1)}=0$ in $\Omega - \bar{D} \times (0,T)$.

We then consider the solution $w^{(2)}$ of~\eqref{eq:model2.1.2}. If
the boundary data $g_0$ is identically zero it is clear that
$w^{(2)}=0$. Therefore, assume that $g_0$ is not identically zero.
Without loss of generality, we can assume that $g_0(x,t) > 0$ at a
point $(x,t)$ of the boundary $\partial D \times (0,T)$ and since
$g_0$ is at least continuous it follows that $g_0(x,t) > 0$ in $E
\times (t_1,t_2)$, with $E$ being a surface patch of $\partial D$.
This in turn implies that $w^{(2)}(x,t)> 0$ for $(x,t)$ sufficiently
close to this open set. Applying \cite[Theorem 9.2]{Isa90} it
follows that $w^{(2)}(x,s) > 0$ for every $t < s \leq T$
contradicting the given final condition $w^{(2)}(x,T)=0$. Therefore,
$g_0=0$ implying that $w^{(2)}=0$.

Thus, since $w=w^{(1)} + w^{(2)}$ we have $w=0$ and therefore the assertion of the lemma is proved.
\end{proof}
Note that, if the measurements are given at an intermediate time $0 <
t_0 < T$, the conclusions of Lemma~\ref{lemma:compact_support} may
not be true.

\subsection{Some additional assumptions and admissible solutions}
In order to proceed to the next step in the proof of uniqueness of
the spacewise dependent pair $(a,f)$ satisfying \eqref{eq:model2}--\eqref{eq:ad-measurement} we need some additional assumptions.
We assume that the admissible set of unknown
elements is
\begin{align}\label{ad_coefficients}
\mathcal{A}: = \{(a,f) \in C^2(\overline{\Omega})\,:\, a(x) >
\underline{a} > 0\,, x \in \Omega\,, \quad
\norm{a}_{C^2(\overline{\Omega})} +
\norm{f}_{C^2(\overline{\Omega})} \leq M \}\,,
\end{align}
where $M > 0$ is an arbitrary fixed constant. Moreover, we assume
that $g$ and $h$ are smooth enough such that we can take the
$t$-derivatives of $w = u - v$ for the time points needed, see
\cite{LandSolUr-1968} for the details on the required smoothness
assumptions on $g$ and $h$ for this to be the case. Note that in
Section~\ref{section:global_uniqueness} and in
Appendix~\ref{appendix} we will provide more details on the
regularity of the parameters and the input data. Moreover, we also
assume that $\partial \Omega$ is smooth enough such that $\partial
\Omega \subset \{x_1 = 0\}$. In fact, the boundary can be covered by
a finite number of surface patches mapping to this region and it is
therefore enough to consider the estimates in this half-space. The
general results can be obtained in standard way for partial
differential equations by using a partition of unity argument.

We remark that since the coefficients and the source term are
time-independent, the solution of \eqref{eq:model2} is analytic in time
\cite{Isa06, Vessella2008}. Moreover, the compact support of $w$ in $\Omega$ guaranteed by
Lemma~\ref{lemma:compact_support} implies the follows boundary
conditions
\begin{align}\label{eq:boundary_w}
w_t(x,t)  & = 0 \quad (x,t) \in \partial \Omega \times (0,T)\,, \nonumber\\
\frac{\partial }{\partial \eta} w_t(x,t)  & = 0 \quad (x,t) \in
\partial \Omega \times (0,T)\,, \\
\frac{\partial }{\partial \tau} w_t(x,t)  & = 0 \quad (x,t) \in
\partial \Omega \times (0,T)\,, \nonumber
\end{align}
where $\eta$ is the outer normal vector and $\tau$ is the tangential
normal vector  at $\partial \Omega$. Moreover, $\frac{\partial
}{\partial \eta} w_t(x,t) = \eta(x) \cdot \nabla w_t(x,\cdot)$ and $
\frac{\partial }{\partial \tau} w_t(x,t) = \tau (x) \cdot \nabla
w_t(x, \cdot)$ are the normal and tangential derivatives of $w_t$
for $x \in \partial \Omega$.

We shall present
a uniqueness result for the inverse problem \eqref{eq:model2}--\eqref{eq:ad-measurement} based on local Carleman estimates.
A crucial step for such estimates is the construction of suitable weight functions,
$\varphi(x,t)$ and $\beta(x,t)$. For a
given domain $D$ we can apply the arguments in \cite[Section
5.1]{Yamamoto2009} or the arguments in \cite[Section
2]{Poisson2008}, with a special choice of a function $d(x)$ such
that
\begin{align} \label{eq:psi}
\varphi(x,t) = e^{\lambda \beta(x,t)}\,
\end{align}
and $\beta(x,t)=d(x)+ e(t)$, where  $\max_{t\in [0,T]}
\varphi(x,t) = \varphi(x,T)$. Moreover, we can construct
$\varphi$ and $\beta$ with
\begin{align}\label{eq:weight_functions}
Q:= \{(x,t)\,:\, x_1 > 0\,,\, \varphi(x,t) > e^{-\lambda \delta}\} =
\{(x,t)\,:\, x_1 > 0\,,\, \beta(x,t) >- \delta\}\,,
\end{align}
and $Q \cap \{t = T\} = D$. In \eqref{eq:psi} and
\eqref{eq:weight_functions}, $\lambda, \delta > 0$ are some fixed
constants being sufficiently large.

Let we give a very simple example for choice $d(x)$
and $e(t)$ form \cite[Section 6]{Yamamoto2009}.
%
\begin{ex} Set $x'= (x_2, \cdot, x_n)$ and $x =
(x_1, x') \in \R^n$. Let we assume that $D = D(\delta):=\{(x,x') ; 0
< x_1 < - |x'|^2/ \gamma +\delta/ \gamma \}$ and moreover
$D(4\delta)\subset \Omega$, for some $\gamma > 0$ and $\delta > 0$.
Define $d(x) := - \gamma x_1 - |x'|^2$ and $e(t):= - (t- T)^2$.
Therefore, we have by \eqref{eq:psi} that $\max_{t\in [0,T]}
\varphi(x,t) = \varphi(x,T)$ and that $Q = Q(\delta)$ satisfies $Q
\cap \{t = T\} = D$.
\end{ex}

As mentioned earlier, since we have spacewise dependent
coefficients, the solution of \eqref{eq:model2} is analytic in time.
Therefore, the solution can be extended beyond the final time $T$, see
\cite[Section 3]{Vessella2008} and $T$ can therefore be considered as an interior
point, as is required in the derivation of the results in \cite[Section 6]{Yamamoto2009}.

We remark that one can try to identify the most general set of conductivity coefficients having minimal
regularity assumptions together with minimal regularity of the boundary of $\Omega$ for which the weight functions $\varphi$ and $\beta$ do exist. However, as pointed out earlier this is not the main
aim of this study, we shall only present a proof in the case of smooth and regular solutions and domains. For a discussion about more general function spaces and domains for which this derivation can hold true, see \cite{Poisson2008, Yamamoto2009} and references therein.

We also remark that since $w$ has compact support (see
Lemma~\ref{lemma:compact_support}), we do not need to use cut-off
functions. Therefore, many of the calculations and terms in \cite[Theorem
6.1]{Yamamoto2009} can be dropped and the steps in the proof become
easier to follow. However, since we have two unknowns, some
challenges and adjustments do remain, and we present the steps below.

\subsection{A uniqueness proof}

Let $u = u(a, f_1)$ and $v =v(b, f_2)$ be solutions of
\eqref{eq:model2}--\eqref{eq:ad-measurement}, with spacewise heat
diffusions $a$ and $b$ and spacewise heat sources $f_1$ and $f_2$,
respectively, and put $ w = u - v $. Denote by $ z:= w_t$; a well-defined quantity due to the smoothness assumptions above and since we only work with classical solutions. Given the analyticity of $w$ in time and the assumption
that the unknown pair $(a,f)$ is time independent, it follows that $z$ satisfies
\begin{align}\label{eq:model2.3}
z_t - \nabla \cdot (a(x) \nabla z) & =  \nabla \cdot ((a-b) \nabla
v_t)
\end{align}
with homogeneous boundary, initial and final conditions. Then,
\cite[Theorem 3.2]{Yamamoto2009} implies
\begin{align}\label{eq:carleman}
\int_Q \left( \frac{1}{s} \sum_{i,j =1}^n | \partial_i \partial_j
z|^2 + s |\nabla z|^2 + s^2|z|^2 \right) e^{2 s \varphi} \,dxdt \leq C
\int_Q |\nabla \cdot ((a - b) \nabla v_t)|^2 e^{2 s \varphi} \,dxdt
\end{align}
for all sufficiently large $s > 0$. Note that the integral over the boundary in
\cite[Theorem 3.2]{Yamamoto2009} is identically zero due to
\eqref{eq:boundary_w}.

Now we shall derive an estimate of the right-hand side of
\eqref{eq:carleman}. A direct calculation of $\nabla \cdot ((a -
b) \nabla v_t)$
implies that
\begin{align}\label{eq:direct-calculation}
 |\nabla \cdot ((a - b) \nabla v_t)|^2 \leq (|\nabla( a- b)|^2 +
|a-b|^2) (| \nabla v_t|^2 + |\triangle v_t|^2)\,.
\end{align}
Hence,
\begin{equation}\label{est1}
\int_Q
|\nabla \cdot ((a - b) \nabla v_t)|^2 e^{2 s \varphi} \,dxdt \leq C
(|| \nabla v_t||^2_\infty + ||\triangle v_t||^2_\infty) \int_Q
(|\nabla( a- b)|^2 + |a-b|^2) e^{2 s \varphi} \,dxdt\,.
\end{equation}
Since $u, v$ are assumed smooth enough (see \cite{LandSolUr-1968} for necessary requirement on the data) we have
that $(|| \nabla v_t||^2_\infty + ||\triangle v_t||^2_\infty) <
\infty$. Therefore, $C = (|| \nabla v_t||^2_\infty + ||\triangle
v_t||^2_\infty) $ is finite.

Combining (\ref{eq:carleman}) and (\ref{est1}) imply
\begin{align}\label{eq:carleman1}
\int_Q \left(\sum_{i,j =1}^n | \partial_i \partial_j z|^2 + s^2
|\nabla z|^2 + s^3|z|^2 \right) e^{2 s \varphi} \,dxdt \leq  C
\int_Q s (|\nabla( a- b)|^2 + |a-b|^2) e^{2 s \varphi} \,dxdt 
\end{align}
for all sufficiently large $s > 0$. 

Given the final time measurement \eqref{eq:ad-measurement},
equation~\eqref{eq:model2.1} gives, at $t = T$,
\begin{align}\label{eq:eliptic}
\nabla \cdot ((a-b) \nabla g(x)) &  =  w_t(x,T) 
+ (f_1 - f_2)(x)\\
\nabla(\nabla \cdot ((a-b) \nabla g(x))) &  = \nabla w_t(x,T) 
+ \nabla(f_1 - f_2)(x)\,. \nonumber
\end{align}
%
%
It follows from a straightforward manipulation of \eqref{eq:eliptic} together with integration that
%
\begin{align}\label{eq:1-l}
\int_D (|f_1 - f_2|^2 & + |\nabla(f_1 - f_2)|^2) e^{2 s
\varphi(x,T)}\,dx \nonumber\\
& + \int_D (|\nabla \cdot ((a-b)) \nabla g(x)|^2 + |\nabla(\nabla
\cdot ((a-b) \nabla g(x)))|^2) e^{2 s \varphi(x,T)}\,dx \nonumber\\
& \leq \int_D (|w_t(x,T)|^2 + |\nabla w_t(x,T)|^2 ) e^{2 s
\varphi(x,T)}\,dx \\
& + 2 \int_D |(\nabla \cdot ((a-b) \nabla g(x)) (f_2 - f_1)(x))|
e^{2 s \varphi(x,T)}\,dx \nonumber\\
&+ 2 \int_D |(\nabla(\nabla \cdot ((a-b) \nabla g(x)))\nabla(f_2 -
f_1)(x))| e^{2 s \varphi(x,T)}\,dx. \nonumber
\end{align}

We put
$$ A = 2 \int_D |(\nabla \cdot ((a-b) \nabla g(x)) (f_2 - f_1)(x))|
e^{2 s \varphi(x,T)}\,dx$$
and
$$ B = 2 \int_D |(\nabla(\nabla \cdot ((a-b) \nabla g(x)))\nabla(f_2 -
f_1)(x))| e^{2 s \varphi(x,T)}\,dx\,.$$

Note that
\begin{align}\label{eq:norm}
\norm{\partial_j w_t e^{s \varphi}}_{H^1(Q)}^2 \leq \int_Q
\left(\sum_{i,j = 1}^n |\partial_i \partial_j w_t|^2 + s^2|\nabla
w_t|^2 + s^3|w_t|^2 \right)e^{2 s \varphi } \,dx dt .
\end{align}
Applying the trace theorem \cite{Ada75} in $Q$ and noting that $Q
\cap \{t=T\} = D$ we have
\begin{align}\label{eq:norm1}
\int_D(|w_t(x,T)|^2 + |\nabla w_t(x,T)|^2)e^{2s \varphi(x,T)} \,dx
\leq  C \sum_{j=0}^1\norm{\partial_j w_t e^{s
\varphi}}_{H^1(Q)}^2\,.
\end{align}

Now, \eqref{eq:carleman1}, \eqref{eq:1-l}, \eqref{eq:norm} and
\eqref{eq:norm1} yield
\begin{align}\label{eq:2}
\int_D (|f_1 - f_2|^2 & + |\nabla(f_1 - f_2)|^2) e^{2 s
\varphi(x,T)}\,dx \nonumber\\
& + \int_D (|\nabla \cdot ((a-b)) \nabla g(x)|^2 + |\nabla(\nabla
\cdot ((a-b) \nabla g(x)))|^2) e^{2 s \varphi(x,T)}\,dx \nonumber\\
& \leq C \int_Q s(|\nabla(a-b)|^2 + |a-b|^2 ) e^{2 s
\varphi(x,t)}\,dxdt + (A+B)\,. 
\end{align}
%

The next step in the proof is an estimate of the second integral
in the left hand-side of equation~\eqref{eq:2}.
\begin{lemma}\cite[Lemma
6.1]{Yamamoto2009}\label{lemma:Yamamoto_6.1} Let   $(a-b) \in
H^2(\overline{\Omega})$ such that $|a-b| = |\nabla(a-b)| = 0$ on
$\partial \Omega$. Assume that
\begin{align}\label{eq:assumption_Lemma_6.1}
\gamma \partial_1 g(x) + 2 \sum_{j=2}^n (\partial_j g)(x) & \leq 0
\quad x \in \overline{\Omega}\,,\\
\partial_1 g(x) & > 0\,,\quad x \in \partial \Omega.\nonumber
\end{align}
Then there exists a constant $C > 0$ such that
\begin{align}\label{eq:6.22}
\int_\Omega s^2(|\nabla(a-b)|^2 & + |a-b|^2) e^{2 s \varphi(x,T)} \,dx \nonumber \\
& \leq C \int_\Omega (|\nabla \cdot ((a-b)) \nabla g(x)|^2 +
|\nabla(\nabla \cdot ((a-b) \nabla g(x)))|^2) e^{2 s
\varphi(x,T)}\,dx
\end{align}
for all sufficiently large $s > 0$.
\end{lemma}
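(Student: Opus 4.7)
The plan is to treat the right-hand side of~\eqref{eq:6.22} as the output of a first-order transport-type operator acting on the coefficient difference $p:=a-b$ and to establish an elliptic Carleman estimate for it with weight $e^{2s\varphi(x,T)}$. Writing
\begin{equation*}
L p := \nabla\cdot(p\nabla g) = \nabla g\cdot\nabla p + (\Delta g)\,p,
\end{equation*}
we view $L$ as a first-order differential operator in $p$ whose coefficients depend only on the known data $g$. The boundary conditions $p=|\nabla p|=0$ on $\partial\Omega$ allow every integration by parts below to be performed without producing boundary contributions.

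First I would derive the estimate for the zeroth-order term. Setting $q:=e^{s\varphi(x,T)}p$, a direct conjugation gives $e^{s\varphi(x,T)}Lp = L_s q$ with
\begin{equation*}
L_s q \;=\; \nabla g\cdot\nabla q \;-\; s\bigl(\nabla g\cdot\nabla\varphi(x,T)\bigr)q \;+\; (\Delta g)\,q.
\end{equation*}
Squaring and integrating over $\Omega$, the cross term between $\nabla g\cdot\nabla q$ and $-s(\nabla g\cdot\nabla\varphi)q$ becomes, after integration by parts, a quantity of the form $s\int_\Omega|q|^2\,\nabla\cdot\bigl[(\nabla g\cdot\nabla\varphi(x,T))\nabla g\bigr]\,dx$, while the square of the zeroth-order piece produces $s^2\int_\Omega|\nabla g\cdot\nabla\varphi(x,T)|^2|q|^2\,dx$. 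The hypothesis $\gamma\partial_1 g+2\sum_{j\geq 2}\partial_j g\leq 0$ together with $\partial_1 g>0$ on $\partial\Omega$ and the explicit form $d(x)=-\gamma x_1-|x'|^2$ from Section~\ref{section:local_uniqueness} is precisely what forces $\nabla g\cdot\nabla\varphi(x,T)$ to be bounded below by a positive constant on $\overline{\Omega}$. For $s$ large the $s^2$-term then dominates the lower-order contribution and we obtain $s^2\int_\Omega|p|^2 e^{2s\varphi(x,T)}\,dx\leq C\int_\Omega|Lp|^2 e^{2s\varphi(x,T)}\,dx$.

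For the gradient piece I would differentiate the identity $Lp=\nabla\cdot(p\nabla g)$ componentwise, writing
\begin{equation*}
L(\partial_k p) \;=\; \partial_k\bigl(\nabla\cdot(p\nabla g)\bigr) \;-\; (\partial_k\nabla g)\cdot\nabla p \;-\; (\partial_k\Delta g)\,p,
\end{equation*}
so that each $\partial_k p$ again satisfies a transport equation of the same type, whose source is the $k$-th component of $\nabla(\nabla\cdot(p\nabla g))$ plus commutator terms of equal or lower differential order. Running the conjugation argument a second time with $q_k:=e^{s\varphi(x,T)}\partial_k p$ gives an estimate of the shape $s^2\int_\Omega|\nabla p|^2 e^{2s\varphi(x,T)}\,dx\leq C\int_\Omega\bigl(|\nabla(\nabla\cdot(p\nabla g))|^2+|\nabla p|^2+|p|^2\bigr)e^{2s\varphi(x,T)}\,dx$; the $|\nabla p|^2$ commutator term is absorbed on the left for $s$ large, and the $|p|^2$ term is controlled by the previous estimate. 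Adding the two bounds yields~\eqref{eq:6.22}.

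The main obstacle is verifying the coercivity of the conjugated operator $L_s$ at order $s^2$, which rests on the positive lower bound for $\nabla g\cdot\nabla\varphi(x,T)$ throughout $\overline\Omega$; this is exactly the purpose of the pointwise sign conditions on $g$ given the explicit choice of $d(x)$. A secondary subtlety is that in the gradient estimate one must move two derivatives of $p$ across during the integrations by parts, which is why the stronger trace hypothesis $|a-b|=|\nabla(a-b)|=0$ on $\partial\Omega$ is imposed in the statement.
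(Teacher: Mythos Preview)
Your approach is correct and coincides with the underlying argument. Note that the paper does not actually prove this lemma; it is quoted verbatim from \cite[Lemma~6.1]{Yamamoto2009}, and the Appendix explains that it is a corollary of \cite[Lemma~6.2]{Yamamoto2009}, restated there as the proposition on the first-order operator $P_0\zeta=\nabla\zeta\cdot\nabla g+\zeta\,\Delta g$ with the single coercivity hypothesis $\nabla g(x)\cdot\nabla\varphi(x,T)>0$ on $\overline\Omega$. This is precisely the mechanism you isolate: conjugation by $e^{s\varphi(\cdot,T)}$ turns $L=P_0$ into $L_s$ whose leading $s$-term is $-s(\nabla g\cdot\nabla\varphi)q$, the square of which furnishes the $s^2$-weighted $L^2$ bound once $\nabla g\cdot\nabla\varphi$ is uniformly positive; the commutator bootstrap you describe for $\partial_k p$ then upgrades this to the $H^1$ estimate~\eqref{eq:6.22}. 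The role you assign to the trace conditions $p=|\nabla p|=0$ on $\partial\Omega$ (killing boundary terms in both rounds of integration by parts) is also exactly right.
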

Note that, since we assume that $a = b$ in $\Omega -
\overline{D}$, the estimate \eqref{eq:6.22} holds over $D$.
This in turn using equation~\eqref{eq:2} and
Lemma~\ref{lemma:Yamamoto_6.1} give

\begin{align}\label{eq:3}
\int_D (|f_1 - f_2|^2 & + |\nabla(f_1 - f_2)|^2) e^{2 s
\varphi(x,T)}\,dx  + \int_D s^2(|\nabla(a-b)|^2 + |a-b|^2) e^{2 s \varphi(x,T)} \,dx \nonumber\\
& \leq C \int_Q s(|\nabla(a-b)|^2 + |a-b|^2 ) e^{2 s
\varphi(x,t)}\,dxdt  + (A+B)
\end{align}
for all sufficiently large $s>0$.
Since $\varphi(x,t) \leq \varphi(x,T)\,, x \in
(\overline{\Omega})\,,\quad 0 \leq t \leq T$, we have
\begin{align}\label{eq:4}
\int_D (|f_1 - f_2|^2 & + |\nabla(f_1 - f_2)|^2) e^{2 s
\varphi(x,T)}\,dx  + \int_D s^2(|\nabla(a-b)|^2 + |a-b|^2) e^{2 s \varphi(x,T)} \,dx \nonumber\\
& \leq C T \int_D s(|\nabla(a-b)|^2 + |a-b|^2 ) e^{2 s
\varphi(x,T)}\,dxdt + (A+B)
\end{align}
for all sufficiently large $s>0$.

The final step in the proof of uniqueness is to obtain an estimate of the quantity $A + B$ in~(\ref{eq:4}) in terms of
the coefficients of the first equation in (\ref{eq:model2.1}). We assume that
\begin{align}\label{eq:smoothness-g}
\norm{\nabla (\triangle g(x))}_\infty + \norm{\triangle g(x)}_\infty
+ \norm{\nabla g}_\infty < \infty\,.
\end{align}

Using the Young inequality with $\varepsilon>0$ we have
\begin{align*}
A = 2 \int_D |(\nabla \cdot ((a-b) \nabla g(x)) (f_2 - f_1)(x))|
e^{2 s \varphi(x,T)}\,dx & \leq \frac{2}{\varepsilon} \int_D |(\nabla
\cdot ((a-b) \nabla g(x))|^2 e^{2 s \varphi(x,T)}\,dx \\
& + \varepsilon \int_D | (f_2 - f_1)(x))|^2 e^{2 s \varphi(x,T)}\,dx
\end{align*}
and
\begin{align*}
 B = 2 \int_D |(\nabla(\nabla \cdot ((a-b) \nabla g(x)))\nabla(f_2
- f_1)(x))| e^{2 s \varphi(x,T)}\,dx & \leq \frac{2}{\varepsilon}
\int_D |(\nabla(\nabla \cdot ((a-b) \nabla g(x)))|^2  e^{2 s
\varphi(x,T)}\,dx \\
& + \varepsilon \int_D |\nabla(f_2 - f_1)(x))|^2 e^{2 s
\varphi(x,T)}\,dx.
\end{align*}

Using the similar estimate as before for $ |(\nabla \cdot
((a-b) \nabla g(x))|$, see (14), in combination with assumption \eqref{eq:smoothness-g}
%
we have
\begin{align*}
A <  \frac{C}{\varepsilon}\int_D (|\nabla(a-b) |^2 + |a-b|^2) e^{2 s
\varphi(x,T)}\,dx  + \varepsilon \int_D | (f_2 - f_1)(x))|^2 e^{2 s
\varphi(x,T)}\,dx.
\end{align*}
%
%
%
Similarly, a direct calculation of $ (\nabla(\nabla \cdot ((a-b) \nabla
g(x)))$ 
using the assumption \eqref{eq:smoothness-g} imply
%
\begin{align*}
\int_D |(\nabla(\nabla \cdot ((a-b) \nabla g(x)))|^2  e^{2 s
\varphi(x,T)}\,dx \leq C \int_D(|\triangle (a-b)|^2 + |\nabla (a-b)|^2
+ |a-b|^2)e^{2 s \varphi(x,T)}\,dx.
\end{align*}

We then choose $ 0 < \varepsilon < 1/2$. Using the above estimates
for $A$ and $B$ in (\ref{eq:4}) and put together this give
\begin{align}\label{eq:5}
\int_D (|f_1 - f_2|^2 & + |\nabla(f_1 - f_2)|^2) e^{2 s
\varphi(x,T)}\,dx  + \int_D s^2(|\nabla(a-b)|^2 + |a-b|^2) e^{2 s \varphi(x,T)} \,dx \nonumber\\
& \leq C T \int_D [(s + 1)(|\nabla(a-b)|^2 + |a-b|^2 ) + |\triangle
(a-b)|^2] e^{2 s \varphi(x,T)}\,dxdt\,
\end{align}
for all sufficiently large $s>0$.

For $s > 0$ sufficiently large the terms in the right-hand side can
be absorbed by the corresponding terms in left hand-side. Indeed,
by assumption, $|\triangle
(a-b)|^2 \leq M^2$ and therefore it is enough to choose
$s>0$, such that $ s^2(|\nabla(a-b)|^2 + |a-b|^2) > CT
(s+1)(|\nabla(a-b)|^2 + |a-b|^2) + M^2$. Thus, with such a choice of $s>0$, we conclude that the second
integral in the left-hand side is identically zero, i.e. $a=b$ also
in $D$. The inequality~\eqref{eq:5} then implies that also the sources are equal, i.e.
$f_1=f_2$. Since we concluded that the coefficients are equal one
can alternatively use \cite[Theorem 1]{Rundel80} to obtain
uniqueness of the sources.

The obtained results can be summarised as follows.
\begin{theo}\label{theo:uniqueness}
Let the spacewise conductivity coefficient and heat source $(a,f) \in \mathcal{A}$, where $\mathcal{A}$ is given by~\eqref{ad_coefficients}, with the coefficient $a$
satisfying Assumption~\ref{ass:a}. Moreover, assume that the initial condition
$h$ and the final time measurement $g$ are regular enough such that the
corresponding solution $u(x,t)$ of \eqref{eq:model2} satisfies
$$
\norm{\nabla u_t}_{L^\infty((0,T) \times \Omega)} + \norm{\triangle
u_t}_{L^\infty((0,T) \times \Omega)} < M.
$$
Moreover, assume that the
final time condition $g$ satisfies the
Assumption~\eqref{eq:assumption_Lemma_6.1} in
Lemma~\ref{lemma:Yamamoto_6.1} and \eqref{eq:smoothness-g}. Then
the inverse problem of identifying $\{a(x), f(x), u(a,f)\}$ in the
heat equation \eqref{eq:model2} for a given additional measurement
$g(x) = u(x,T)$ has a unique solution.
\end{theo}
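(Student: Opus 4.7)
The plan is to assume two solution triples $(a, f_1, u)$ and $(b, f_2, v)$ of the inverse problem \eqref{eq:model2}--\eqref{eq:ad-measurement} and show they must coincide. I would set $w = u - v$ and $z = w_t$; since both coefficient pairs are time-independent and the solutions are analytic in $t$, differentiating the governing equation yields a parabolic equation for $z$ with homogeneous initial, final and boundary data and right-hand side $\nabla \cdot ((a-b) \nabla v_t)$. Lemma~\ref{lemma:compact_support} ensures that $w$, and hence $z$, has compact support in $D$, so the boundary contributions in the forthcoming Carleman estimate vanish and no cut-off functions are required.

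The central engine is the local Carleman estimate \cite[Theorem 3.2]{Yamamoto2009} applied to $z$ with weights $\varphi, \beta$ chosen as in \eqref{eq:psi}--\eqref{eq:weight_functions}. I would estimate $|\nabla \cdot ((a-b) \nabla v_t)|^2$ pointwise by $(|a-b|^2 + |\nabla(a-b)|^2)$ times the $L^\infty$-norms of $\nabla v_t$ and $\triangle v_t$ (finite by the regularity hypothesis), arriving at \eqref{eq:carleman1}. In parallel, evaluating the $w$-equation at $t=T$ together with the measurement $u(\cdot,T) = v(\cdot,T) = g$ produces the elliptic identity \eqref{eq:eliptic} relating $w_t(\cdot,T)$, $f_1 - f_2$ and $\nabla \cdot ((a-b)\nabla g)$; squaring, differentiating once more, integrating against $e^{2s\varphi(x,T)}$ and expanding yields \eqref{eq:1-l} with the cross terms $A$ and $B$.

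I would next invoke the trace theorem on $Q$, using $Q \cap \{t=T\} = D$, to bound $\int_D (|w_t(x,T)|^2 + |\nabla w_t(x,T)|^2) e^{2s\varphi(x,T)}\,dx$ by weighted $H^1$-norms over $Q$, which are in turn absorbed by the left-hand side of the Carleman estimate. To convert $|\nabla \cdot ((a-b)\nabla g)|^2$ back into quantities involving only $|a-b|^2 + |\nabla(a-b)|^2$, I apply Lemma~\ref{lemma:Yamamoto_6.1}, whose geometric hypotheses on $g$ are supplied by \eqref{eq:assumption_Lemma_6.1}. The cross terms $A$ and $B$ are handled via Young's inequality with small $\varepsilon < 1/2$: the $f_1 - f_2$ terms are absorbed into the left-hand side, while $|\nabla(\nabla \cdot ((a-b)\nabla g))|^2$ is expanded by means of \eqref{eq:smoothness-g} into a combination of $|a-b|^2$, $|\nabla(a-b)|^2$ and $|\triangle(a-b)|^2$.

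The final step is a parameter absorption from the consolidated inequality \eqref{eq:5}. The left-hand side carries a factor $s^2$ multiplying $|\nabla(a-b)|^2 + |a-b|^2$, while the right-hand side has only a factor $s$ on the same quantities together with a $|\triangle(a-b)|^2$ term that is uniformly bounded by $(2M)^2$ thanks to the $C^2$-constraint in $\mathcal{A}$. Choosing $s$ large enough for the $s^2$ gain to dominate $CT(s+1)$ and to swallow the $M$-dependent constant forces the coefficient integral on $D$ to vanish, so $a = b$ on $D$; combined with Assumption~\ref{ass:a} we get $a = b$ throughout $\Omega$. Feeding this back into \eqref{eq:5} gives $f_1 = f_2$ on $D$, which together with the conclusion $f_1 = f_2$ on $\Omega \setminus \overline{D}$ from Lemma~\ref{lemma:compact_support} finishes the proof; alternatively, once the coefficients agree, \cite[Theorem 1]{Rundel80} gives uniqueness of the source directly. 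The most delicate step is the bookkeeping in the absorption argument, since the Laplacian of $a-b$ appears on the right-hand side with no compensating decay in $s$ and must be tamed through the a priori $C^2$-bound rather than through the Carleman weight.
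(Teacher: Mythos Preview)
Your proposal is correct and follows essentially the same approach as the paper's own proof: the same decomposition $w=u-v$, $z=w_t$; the same Carleman estimate from \cite[Theorem~3.2]{Yamamoto2009}; the same elliptic identity at $t=T$; the same use of the trace theorem, Lemma~\ref{lemma:Yamamoto_6.1}, Young's inequality for the cross terms $A$ and $B$; and the same absorption argument exploiting the $C^2$-bound on $a-b$ to handle the residual $|\triangle(a-b)|^2$ term. Your closing remark about the delicacy of the absorption step is apt, and your bound $(2M)^2$ for $|\triangle(a-b)|^2$ is in fact slightly more careful than the paper's $M^2$.
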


In the Appendix we shall prove that there exist conductivities and
sources which can generate a final time value satisfying \eqref{eq:assumption_Lemma_6.1}, i.e. the
inverse problem \eqref{eq:model2}--\eqref{eq:ad-measurement} with \eqref{eq:assumption_Lemma_6.1} imposed will
have a solution for some data (and this solution is unique according to
the above theorem).

We could now dwell into lengthy calculations on the uniqueness of
the inverse problem \eqref{eq:model2}--\eqref{eq:ad-measurement} under less regularity assumptions on the
coefficient and data. There are indeed many generalizations of
Carleman estimates for parabolic equations that one could potentially
use in this case similar to the arguments in \cite{Poisson2008,
Poisson_Carleman2008, Vessella2008, Yamamoto2009}. However, we
prefer not to enter into these technicalities.

Let us though briefly discuss some alternative estimates in the
above proof of uniqueness that could potentially reduce the imposed
smootheness assumptions. We used the
estimate~\eqref{eq:direct-calculation} to obtain an upper bound of
the right-hand side of \eqref{eq:carleman}. However, there are
alternative estimates that can be used. For example, in
\cite[Subsection 3.2]{Poisson2008} the right-hand side of
\eqref{eq:carleman} is estimated by
\begin{align*}
\int_Q |\nabla v_t|^2|a-b|^2 e^{-2 s \varphi}\,dx dt\,.
\end{align*}
Note that, with the above estimate, equation~\eqref{eq:carleman1}
follows with less assumptions on the regularity of the coefficients
as well as of \eqref{eq:model2}. Moreover, in \cite[Theorem
3.1]{Poisson2008} a Carleman estimate (for $L^\infty(\Omega)$
coefficients) relating the $L^2$-norm of the coefficients with the
right-hand side of equation~\eqref{eq:eliptic} is given. In other
words, \cite[Thoerem 3.1]{Poisson2008} implies that one do not need to
differentiate as high in time as we did in the second equation
of~\eqref{eq:eliptic}. These two factors together would, most likely,
improve and weaken the smoothness assumptions in
Theorem~\ref{theo:uniqueness}.

We point out that the Assumption~1 was imposed merely for technical
reasons to simplify the presentation of the Carleman estimates and
to avoid cut-off functions. To convince the reader that this
assumption can be removed and to further highlight that our
assumptions are far from the most general ones, we discuss in the
next section the uniqueness for a more general class of spacewise
parameters in the one-dimensional parabolic heat equation using a
completely different technique, which does not lend itself to higher
dimensions though.


\section{Uniqueness of the spacewise heat conductivity and heat source in a one-dimensional heat
equation}\label{section:global_uniqueness}

In this section we show the uniqueness under weaker smoothness
assumptions  compared with Section~\ref{section:local_uniqueness} of
the identification of the spacewise pair of coefficient and source
$(a,f)$ in \eqref{eq:model2}, with additional measurements given by
\eqref{eq:ad-measurement}. In particular, the Assumption~1 will not
be used. However, we only consider the case with $\Omega = (0,L)
\subset \R$, i.e. the spacewise solution domain in
\eqref{eq:model2}--\eqref{eq:ad-measurement}  is one-dimensional in
space.

The derivation of the uniqueness result is based on a completely
different technique than Carleman estimates. Indeed, the technique
is based on results that relate the uniqueness of the inverse
identification problem to the density in certain function spaces of solutions of the
corresponding adjoint problem. With this approach the assumptions
on the smoothness of the unknown pair $(a,f)$ is reduced to a
more general class. Moreover, assumptions on the smoothness of the
input and measured data are determined only by extracting a certain
differential dependence and that there exists a solution
for the corresponding direct problem.

We start by assuming the following
regularity conditions for the parameter, the source, the initial
condition  and the measured data in the inverse problems~\eqref{eq:model2}--\eqref{eq:ad-measurement}; compare with
the assumptions in Section~\ref{section:local_uniqueness}.
\begin{assumption}\label{ass:1}
We assume that the heat conductivity $a \in L^\infty((0,L))$ and
that there exists $\underbar{a} > 0$ such that $ a(x) \geq \underbar{a}$
for all $x \in (0,L)$. Moreover, it is assumed that $a(\cdot)$ is continuous in
$[0,\varepsilon)$ and in $(L - \varepsilon, L]$ for any fixed
$\varepsilon > 0$, the heat source $f\in L^2([0,L])$, the initial
temperature $h \in H^1([0,L])$, and that the additional final time temperature
measurement $g\in H^1([0,L])$.
\end{assumption}

Since $h, g \in H^1([0,L])$ we further need to assume that the following
matching conditions (compatibility) are satisfied in $x=0$ and $x=L$
\begin{align}\label{eq:matching-conditions}
- (a(0) h_x(0))_x & = f(0)\,,\nonumber\\
- (a(L) h_x(L))_x & = f(L)\,,\\
- (a(0) g_x(0))_x & = f(0)\,,\nonumber\\
- (a(L) g_x(L))_x & = f(L)\,.\nonumber
\end{align}

For the existence and regularity of a solution for the corresponding
direct problem with the pair $(a(x), f(x))$
satisfying the conditions on regularity stated in
Assumption~\ref{ass:1}, we have:
\begin{lemma}\label{lemma:ex-est-DP}
Let the Assumption~\ref{ass:1} and the matching conditions
\eqref{eq:matching-conditions} hold. Then, there exists a unique
solution $u(x,t) \in H^{2,1}([0,L]\times [0,T])$ of the
boundary value problem \eqref{eq:model2}. Moreover, there exists a constant $M_0$
such that $\norm{u}_{C([0,L]\times[0,T])} \leq M_0$.
\end{lemma}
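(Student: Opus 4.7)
The plan is to combine standard linear parabolic theory with a bootstrap that exploits the time-independence of $(a,f)$, and then to extract the pointwise bound from the one-dimensional Sobolev embedding $H^1([0,L]) \hookrightarrow C([0,L])$. First I would introduce the symmetric bilinear form $B(u,v) := \int_0^L a(x) u_x v_x\,dx$ on $H_0^1([0,L])$. By Assumption~\ref{ass:1}, $B$ is continuous and coercive with constant $\underline{a}$, so the operator $-Lu := \partial_x(a(x)\partial_x u)$ with Dirichlet data generates an analytic contraction semigroup on $L^2([0,L])$. After subtracting from $u$ a fixed $H^1$-lift of $h$, one may assume homogeneous initial data.

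Next I would invoke the standard Galerkin/energy argument as in Chapter III of~\cite{LandSolUr-1968} to produce a unique weak solution $u \in L^2([0,T];H_0^1) \cap C([0,T];L^2)$ with
$$
\sup_{t \in [0,T]} \norm{u(\cdot,t)}_{L^2}^2 + \int_0^T B(u,u)\,dt \leq C\bigl(\norm{h}_{L^2}^2 + T\norm{f}_{L^2}^2\bigr).
$$
Uniqueness is immediate by applying the same estimate to the difference of two solutions. To upgrade to the $H^{2,1}$-regularity I would exploit the fact that $(a,f)$ is time-independent: formally differentiating the equation gives $v := u_t$ satisfying the homogeneous problem $v_t + Lv = 0$ with $v(\cdot,0) = -Lh + f$, and the matching conditions \eqref{eq:matching-conditions} ensure that this initial value lies in the correct space so that the energy estimate for $v$ yields $u_t \in L^2([0,T];L^2)$. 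Reinserting into the original PDE gives $\partial_x(a u_x) = u_t - f \in L^2([0,L]\times[0,T])$, which together with the boundary data supplies the $H^{2,1}$-regularity in the sense of~\cite{LandSolUr-1968}.

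For the uniform bound, observe that the above estimates yield $u \in C([0,T];H_0^1([0,L]))$. Since in one space dimension $H^1([0,L]) \hookrightarrow C([0,L])$ with an embedding constant depending only on $L$, setting
$$
M_0 := C_{\mathrm{emb}} \sup_{t \in [0,T]} \norm{u(\cdot,t)}_{H^1([0,L])}
$$
delivers $\norm{u}_{C([0,L]\times[0,T])} \leq M_0$ with $M_0$ depending only on $L$, $\underline{a}$, $T$, $\norm{h}_{H^1}$ and $\norm{f}_{L^2}$.

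The main obstacle is the bootstrap to $H^{2,1}$-regularity under the very mild assumption $a \in L^\infty$: classical Schauder estimates are unavailable, so one must either interpret $\partial_x(a u_x)$ weakly or, as above, use the time-independence of $a$ to transfer regularity from the spatial to the temporal variable. The boundary-layer continuity of $a$ assumed near $x=0$ and $x=L$ is what makes the compatibility conditions \eqref{eq:matching-conditions} meaningful and allows the argument to close without imposing global smoothness on $a$.
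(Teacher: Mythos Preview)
Your argument is correct and, for existence and $H^{2,1}$-regularity, effectively unpacks what the paper merely cites: the paper's proof is a three-line appeal to classical parabolic theory in \cite{LM-vol1, LandSolUr-1968}, whereas you sketch the Galerkin energy estimate and the time-differentiation bootstrap explicitly. The genuine methodological difference is in the pointwise bound: the paper obtains $\norm{u}_{C([0,L]\times[0,T])} \leq M_0$ from the Sobolev embedding $H^{2,1}\hookrightarrow C$ together with the \emph{maximum principle} for parabolic equations, while you stay entirely within the energy framework and use the one-dimensional embedding $H^1([0,L])\hookrightarrow C([0,L])$ applied to $u\in C([0,T];H_0^1)$. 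Your route has the advantage of giving an explicit constant $M_0$ depending only on $L,\underline a,T,\norm{h}_{H^1},\norm{f}_{L^2}$ without invoking any sign or comparison structure; the paper's maximum-principle route is shorter and more classical but less quantitative. Either is adequate here.
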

\begin{proof}
The existence and uniqueness follows directly from classical
results on parabolic partial differential equations, see for example
\cite{LM-vol1, LandSolUr-1968}. Now, from the Sobolev embedding Theorem
\cite{LandSolUr-1968, LM-vol1, Ada75} we have that $u(x,t) \in
C([0,L]\times [0,T])$. The uniform boundedness follows from the
maximum principle for parabolic equations \cite{LandSolUr-1968} together with the assumed
 smoothness of the boundary, initial and final data.
\end{proof}

\subsection{Uniqueness of a solution of the inverse problem: 1-d case}

The steps for proving uniqueness of the identification of
the pair of parameters $\{a(x), f(x)\}$ for given initial and final
data in \eqref{eq:model2}--\eqref{eq:ad-measurement} are outlined below:

Assume that $u= u(a, f_1)$ and $v = u(b, f_2)$ are two solutions of
\eqref{eq:model2} with additional data \eqref{eq:ad-measurement}.
As in the previous section let $ w = u - v$. Then, $w$ satisfies
%
%
\begin{align}\label{eq:model2.1_1}
w_t - (a(x) w_x)_x & =   ([a(x) - b(x)]
 v_x)_x + (f_1(x) - f_2(x)) \,\, \mbox{in } \ (0,L) \times (0,T)
\end{align}
with homogeneous initial, boundary and final conditions.

For the proof uniqueness in the one-dimensional
case we shall invoke the
adjoint problem of \eqref{eq:model2.1_1}, that reads as
\begin{align}\label{eq:model2.Adj}
\psi_t + (a(x) \psi_x)_x & = 0 \,\, \mbox{in }
\ (0,L) \times (0,T)\nonumber\\
        \psi(0,t)=\psi(L,t)& = 0     \,\, \mbox{for} \,\, t \in (0,T)\\
     \psi(x,0) & =  0 \,\, \mbox{for }  x \in (0,L)
     \,\nonumber\\
   \psi(x,T) & =  \mu(x) \,\, \mbox{for }  x \in (0,L),
\nonumber
\end{align}
where $\mu(x)$ is an arbitrary function in $C_0^2[0,L]$.

For properties of solutions of the adjoint equation
\eqref{eq:model2.Adj} we have:

\begin{lemma}\label{lemma:prop-adj}
Let the Assumption~\ref{ass:1} hold.
\begin{itemize}
\item[i)] For any function $\mu(x) \in C_0^2[0,L]$, there exists a unique solution $\psi(x,t; \upsilon) \in C^1((0,T);
C^2(0,L)) \cap C([0,L] \times [0,T])$ of \eqref{eq:model2.Adj}.

\item[ii)] For any function $\mu(x) \in C_0^2[0,L]$, the following relation holds
\begin{align}\label{eq:1}
\int_0^T\int_0^L \psi(x,t; \mu(x)) F(x,t) \,dxdt = 0,
\end{align}
where $w$ is a solution to \eqref{eq:model2.1_1} with right-hand side  $F(x,t)$.

\item[iii)]
For $\mu(x)$ ranging over the space $C_0^2[0,L]$, the corresponding
range of $\psi(x,t; \mu(x))|_{t = \tau}$ is everywhere dense in
$L^2[0,L]$ for any time $t=\tau$, $0\leq \tau
\leq T$. 

\item[iv)] Given that
$$
\int_0^T \int_0^L \psi(x,t; \mu(x)) \Phi(x,t) \,dxdt = 0
$$
for $\mu(x)$ ranging over the space $C_0^2[0,L]$, then
$$
\Phi(x,T)=0.
$$

\end{itemize}
\end{lemma}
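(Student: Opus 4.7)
The plan is to handle parts (i)--(iii) using classical parabolic theory, the Green--Riesz integration-by-parts formula, and a standard duality/backward-uniqueness argument, and to reduce (iv) to an operator identity which I then combine with the regularity of $\Phi$ and the spectral decomposition of $L$. For (i), I would reverse time via $\tilde\psi(x,s):=\psi(x,T-s)$, which turns \eqref{eq:model2.Adj} into a forward parabolic initial-boundary value problem with Dirichlet data and initial datum $\mu \in C_0^2[0,L]$ compatible at the corners; existence, uniqueness and the asserted regularity $\psi \in C^1((0,T);C^2(0,L)) \cap C([0,L]\times[0,T])$ then follow from classical parabolic theory (see \cite{LandSolUr-1968}). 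For (ii), I would multiply the PDE $w_t-(aw_x)_x = F$ from \eqref{eq:model2.1_1} by $\psi$ and integrate over $(0,L)\times(0,T)$: two integrations by parts in $x$ (using $\psi|_{x=0,L}=w|_{x=0,L}=0$) together with one in $t$ (using $w(\cdot,0)=w(\cdot,T)=0$) remove every boundary contribution and leave $\int_0^T\int_0^L w\,[\psi_t+(a\psi_x)_x]\,dxdt=0$ by the adjoint equation, giving the claimed identity.

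For (iii), observe that under Assumption~\ref{ass:1} the operator $Lu:=-(au_x)_x$ with Dirichlet conditions is self-adjoint and strictly positive on $L^2[0,L]$, with discrete spectrum $0<\lambda_1\leq\lambda_2\leq\cdots\to\infty$ and a complete orthonormal eigenbasis $\{\phi_k\}$; the heat semigroup $S(s):=e^{-sL}$ is therefore self-adjoint, and $\psi(\cdot,\tau;\mu)=S(T-\tau)\mu$. If $\phi \in L^2[0,L]$ were orthogonal to $\psi(\cdot,\tau;\mu)$ for every $\mu \in C_0^2[0,L]$, self-adjointness would give $\langle S(T-\tau)\phi,\mu\rangle=0$ for all such $\mu$, and density of $C_0^2$ in $L^2$ would force $S(T-\tau)\phi=0$; expanding $\phi=\sum c_k\phi_k$ yields $\sum c_k e^{-(T-\tau)\lambda_k}\phi_k=0$, whence $\phi=0$, proving the density.

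For (iv), the same self-adjointness converts the hypothesis into $\langle\mu,K\Phi\rangle_{L^2[0,L]}=0$ for every $\mu \in C_0^2$, where $K\Phi:=\int_0^T S(T-t)\Phi(\cdot,t)\,dt$, and density of $C_0^2$ in $L^2$ yields $K\Phi\equiv 0$ in $L^2[0,L]$. The main obstacle is that $K$ is not injective on the whole of $L^2([0,T];L^2[0,L])$, so passing from $K\Phi=0$ to the pointwise-in-time conclusion $\Phi(\cdot,T)=0$ requires the additional regularity that $\Phi$ inherits from its origin as the right-hand side of \eqref{eq:model2.1_1}---in particular, continuity in $t$ with values in $L^2[0,L]$ supplied by Lemma~\ref{lemma:ex-est-DP}. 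I would split $K\Phi=\int_0^{T-\epsilon}S(T-t)\Phi(\cdot,t)\,dt+\int_{T-\epsilon}^T S(T-t)\Phi(\cdot,t)\,dt$, identify the second piece with $\epsilon\,\Phi(\cdot,T)+o(\epsilon)$ as $\epsilon\downarrow 0$ by strong continuity of $S(\cdot)$ at zero together with continuity of $\Phi$ in $t$, and control the first piece via the exponential decay $e^{-(T-t)\lambda_k}$ applied coefficient-wise in the spectral expansion $f_k(t):=\langle\phi_k,\Phi(\cdot,t)\rangle$; isolating $\Phi(\cdot,T)$ in the limit and pairing with the trivial $\tau=T$ case of (iii) (where $\{\psi(\cdot,T;\mu)\}=C_0^2$ is already dense in $L^2[0,L]$) then yields $\Phi(\cdot,T)=0$ in $L^2[0,L]$. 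The delicate point throughout is bounding the $[0,T-\epsilon]$ contribution uniformly as $\epsilon\to 0$, which is where the spectral decay of $S(s)$ (or, equivalently, the specific structural form of $\Phi$) must be used.
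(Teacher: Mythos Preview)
Your treatment of parts (i)--(iii) is correct and in the same spirit as the paper's proof, which simply cites classical parabolic theory for (i), states the integration-by-parts identity for (ii), and defers to \cite{Goldman} for (iii) and (iv). Your explicit spectral/backward-uniqueness argument for (iii) is a clean self-contained substitute for that citation.

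Part (iv), however, has a genuine gap. Your splitting $K\Phi=\int_0^{T-\epsilon}+\int_{T-\epsilon}^T$ is circular: since $K\Phi=0$, the first piece equals minus the second, so the identity reduces to $0=-(\epsilon\,\Phi(\cdot,T)+o(\epsilon))+(\epsilon\,\Phi(\cdot,T)+o(\epsilon))$, which carries no information about $\Phi(\cdot,T)$. Concretely, continuity of $\Phi$ in $t$ is not enough to force $\Phi(\cdot,T)=0$ from $K\Phi=0$: take $\Phi(x,t)=\phi_1(x)\,g(t)$ with any continuous $g$ satisfying $\int_0^T e^{\lambda_1 t}g(t)\,dt=0$ and $g(T)\neq 0$ (for instance $g(t)=c_1+c_2 t$ with suitably chosen constants). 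Then $K\Phi=0$ while $\Phi(\cdot,T)=g(T)\phi_1\neq 0$. The ``exponential decay $e^{-(T-t)\lambda_k}$'' you invoke gives decay in $k$, not the $o(\epsilon)$ control in $\epsilon$ that your limit would need.

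You are right that extra structure on $\Phi$ is required, but you do not actually use any beyond continuity. The paper sidesteps the issue by citing \cite[Lemmas~2--3]{Goldman}, where the conclusion is obtained for $\Phi$ arising specifically as the right-hand side of \eqref{eq:model2.1_1} (so that $\Phi$ inherits the analytic-in-time structure of $v$ and is tied to a $w$ with $w(\cdot,0)=w(\cdot,T)=0$); the argument there is not the semigroup splitting you propose. If you want a self-contained proof of (iv), you will need to exploit that structural link---for example, by varying the terminal time of the adjoint problem and differentiating the resulting family of identities---rather than working only with the single operator $K$ and continuity.
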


\begin{proof}
Item i) is a well-known result for parabolic equations, see, for
example, \cite{LM-vol1, LandSolUr-1968}. \\
Item ii) follows immediately  by multiplication of
\eqref{eq:model2.3} by $\psi$  and integration by parts.\\
Item iii) and Item iv) are consequences of \cite[Lemma 2-3, pg 318]{Goldman}.\\
\end{proof}

We now have the required results in order to prove the main step in the uniqueness argument for the conductivity function $a(x)$ by adding some additional assumptions on the final data in
\eqref{eq:model2}--\eqref{eq:ad-measurement}.
\begin{theo}\label{theo:uniqueness-a}
Let the Assumption~\ref{ass:1} and the matching condition
\eqref{eq:matching-conditions} hold. Moreover, assume that $|g_x(x)| > 0
$ for all $0 \leq x \leq L$. Then the inverse problem
\eqref{eq:model2}--\eqref{eq:ad-measurement} has a unique solution
$\{u, a, f\}$ with the
conductivity $a \in L^\infty(0,L)$, the heat source $f \in L^2(0,L)$,  and temperature $u$, with
$\norm{u}_1 < \infty$.
\end{theo}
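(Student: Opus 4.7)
The plan is to follow the adjoint and density scheme outlined just before the statement, in the spirit of \cite[Section~5]{Goldman}. Let $(u, a, f_1)$ and $(v, b, f_2)$ be two solutions of \eqref{eq:model2}--\eqref{eq:ad-measurement} satisfying the theorem's hypotheses, and put $w = u - v$. From \eqref{eq:model2.1_1}, $w$ satisfies
\[
w_t - (a(x) w_x)_x = F(x,t), \qquad F(x,t) := \bigl([a(x)-b(x)]\, v_x(x,t)\bigr)_x + (f_1-f_2)(x),
\]
with homogeneous Dirichlet data on $\{0,L\}\times(0,T)$ and homogeneous conditions at both $t=0$ (shared initial datum $h$) and $t=T$ (shared overdetermination $g$). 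The goal is to show $a = b$ a.e.\ and then $f_1 = f_2$ a.e.

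The first step is to test the equation for $w$ against the backward adjoint solutions from Lemma~\ref{lemma:prop-adj}. For every $\mu\in C_0^2[0,L]$, item (ii) of that lemma gives
\[
\int_0^T\!\int_0^L \psi(x,t;\mu)\,F(x,t)\, dx\, dt = 0,
\]
so item (iv) applied with $\Phi=F$ forces $F(x,T)=0$ on $(0,L)$. Since $v_x(\cdot,T) = g_x$, this produces the pointwise identity
\[
\bigl([a(x)-b(x)]\,g_x(x)\bigr)_x + (f_1-f_2)(x) = 0 \quad \text{in } (0,L), \quad (\mathrm{I})
\]
which is a single scalar relation coupling the two unknowns.

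Relation (I) is not yet enough, since it gives one equation in the two unknowns $a-b$ and $f_1-f_2$. The next step is to use (I) to eliminate $f_1-f_2$ from $F$: substituting $(f_1-f_2) = -\bigl([a-b]g_x\bigr)_x$ reduces the source to $F = \bigl([a-b](v_x-g_x)\bigr)_x$, and one integration by parts in $x$ in the adjoint identity (the boundary term dropping since $\psi(0,t)=\psi(L,t)=0$) yields, for every $\mu\in C_0^2[0,L]$,
\[
\int_0^L (a-b)(x)\, K(x;\mu)\, dx = 0, \qquad K(x;\mu) := \int_0^T \psi_x(x,t;\mu)\,[v_x(x,t) - g_x(x)]\, dt.
\]
Combining the non-vanishing hypothesis $|g_x| > 0$ on $[0,L]$ with the density of $\{\psi(\cdot,\tau;\mu)\}_{\mu}$ in $L^2(0,L)$ at every $\tau\in[0,T]$ from Lemma~\ref{lemma:prop-adj}(iii), I would then argue that the family $\{K(\cdot;\mu)\}_\mu$ is rich enough in a suitable space (say $L^1(0,L)$) to force $(a-b) \equiv 0$ in $L^\infty(0,L)$. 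With $a = b$ established, relation (I) immediately yields $f_1 = f_2$; alternatively, the reduced equation $w_t - (a w_x)_x = f_1-f_2$ with trivial initial, boundary and final data falls under \cite[Theorem~1]{Rundel80}.

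The main obstacle is precisely this density step. The function $K(\cdot;\mu)$ is not itself the trace of an adjoint solution at a fixed time, so Lemma~\ref{lemma:prop-adj}(iii) cannot be applied off the shelf: one must exploit the vanishing of $v_x-g_x$ at $t=T$, the non-vanishing of $g_x$, and the smoothing and density properties of the adjoint family $\{\psi(\cdot,t;\mu)\}$ jointly, following the template of \cite[Section~5]{Goldman}. Once this density is secured, everything else is a routine verification based on Lemma~\ref{lemma:ex-est-DP} and the adjoint identity.
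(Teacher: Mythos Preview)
Your proposal has a genuine gap at exactly the point you flag: the density of the family $\{K(\cdot;\mu)\}_\mu$. This is not a technicality that the Goldman template resolves. The kernel $K(x;\mu)=\int_0^T\psi_x(x,t;\mu)\,[v_x(x,t)-g_x(x)]\,dt$ depends on the unknown solution $v$ through the fixed weight $v_x-g_x$, and Lemma~\ref{lemma:prop-adj}(iii) only gives density of the \emph{traces} $\psi(\cdot,\tau;\mu)$ at a fixed time $\tau$, not of time-averaged spatial derivatives against an arbitrary weight. There is no evident mechanism to pass from one to the other; in particular, if $v_x-g_x$ were small or vanished on a time set of positive measure the family $\{K(\cdot;\mu)\}$ could be severely constrained regardless of how rich $\{\psi(\cdot,\tau;\mu)\}$ is. As written, the argument stalls at relation (I) with two unknowns and one equation.

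The paper closes the gap by a different route that you have not touched: iterated time differentiation combined with time-analyticity. After obtaining (I) it observes that $w(x,T)=0$ together with $F(x,T)=0$ forces $w_t(x,T)=0$ directly from the equation. Setting $z=w_t$, one finds that $z$ solves a problem of the same form but with source $\bigl[(a-b)(v_t)_x\bigr]_x$ (the $f_1-f_2$ term disappears after one $t$-derivative) and with $z(x,T)=0$; Lemma~\ref{lemma:prop-adj}(ii),(iv) applied to $z$ then yields $z_t(x,T)=w_{tt}(x,T)=0$. Iterating, every $\partial_t^k w(\cdot,T)$ vanishes, and since the coefficients are $t$-independent $w$ is analytic in time, so $w\equiv 0$ on $[0,L]\times[0,T]$. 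Reading off the $z$-equation now gives $\bigl[(a-b)(v_t)_x\bigr]_x=0$ for all $(x,t)$; integrating in $x$ using $a(0)=b(0)$ (from the matching conditions~\eqref{eq:matching-conditions}), then in $t$, and evaluating at $t=T$ produces $(a-b)\,g_x=0$, whence $a=b$ by the hypothesis $|g_x|>0$. The missing idea in your outline is precisely this bootstrap on time derivatives, which replaces the unavailable density claim for $\{K(\cdot;\mu)\}$.
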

\begin{proof}
Since we have time-independent coefficients and source, it follows that the solution $v$ to \eqref{eq:model2} with additional data \eqref{eq:ad-measurement} has derivatives of all orders with respect to $t$ \cite{Isa06, Vessella2008} and this in turn implies that $w$ has derivatives of all orders with respect to $t$. Moreover, due to the Sobolev imbedding theorem, we can assume that $w$ in~(\ref{eq:model2.1_1}) is at least continuous; for simplicity we assume that pointwise evaluation makes sense for the coefficient and source.

Note that at $x=0$ and at $x=L$ using the assumptions and matching
condition \eqref{eq:matching-conditions} imply that
\begin{align}\label{eq:a-boundary}
a(x) = b(x) \quad \mbox{ for } x=0 \mbox{ and } x=L\,.
\end{align}

In order to prove that $a(x) = b(x)$ for $ 0 < x < L$ we apply
Lemma~\ref{lemma:prop-adj} Item ii) in combination with iv) in \eqref{eq:1} to get
$$
[(a(x) - b(x)) (v(x,T))_x]_x + (f_1(x) + f_2(x)) = 0
$$
for $0\leq x \leq L$. Using this in the first equation in (\ref{eq:model2.1_1}) in combination with $w(x,T)=0$ for every $x$ in $[0,L]$, we conclude that $w_t(x,T)=0$.

Define $z = w_t$. Similar to the previous section we
have that $z$ satisfies
%
\begin{align}\label{eq:model2.3}
z_t - (a(x) z_x)_x & =  ([a(x) - b(x)]
( v_x )_t)_x \,\, \mbox{in } \ (0,L) \times
(0,T)\nonumber\\
        z(0,t)=z(L,t)& = 0     \,\, \mbox{for} \,\, t \in (0,T)\\
     z(x,0) & =  \theta_1(x) \,\, \mbox{for }  x \in (0,L)
     \,\nonumber\\
     z(x,T) & =  0 \,\, \mbox{for }  x \in (0,L).
     \,\nonumber
\end{align}
Splitting this problem into two, one with zero right-hand side and with  initial condition $\theta_1$ and one with the given right-hand side and zero initial condition, following the proof of Lemma 2 one can conclude that the solution to the first one is identically zero, i.e. $\theta_1(x)=0$. Therefore, since $z$ satisfies a problem of the same kind as $w$ we can again apply
Lemma~\ref{lemma:prop-adj} Item ii) in combination with iv) in \eqref{eq:1}
to conclude that
$$
[(a(x) - b(x)) (v_t(x,T))_x]_x = 0.
$$
Using this in the first equation in (\ref{eq:model2.3}) in combination with $z(x,T)=0$ for every $x$ in $[0,L]$, we conclude that $z_t(x,T)=0$, i.e. $w_{tt}(x,T)=0$. Continuing this, putting $z_1=z_t$ and deriving the problem for $z_1$ and applying the similar reasoning, i.e. Lemma~\ref{lemma:prop-adj} Item ii) in combination with iv), we find that $w_{ttt}(x,T)=0$. Further continuing this it is possible to prove that $(\partial_t^{(k)} w)(x,T) = 0$ for $k=0,1,2,\ldots$ and the same holds at $t=0$. From this and strong unique continuation results for parabolic equations, we conclude that $w(x,t)=0$ for $[0,L] \times [0,T]$. This in particular implies that $z=0$ in $[0,L] \times [0,T]$. From the first equation in (\ref{eq:model2.3}) we then have
$$
[(a(x) - b(x)) (v_t(x,t))_x]_x = 0
$$
for every $(x,t)$ in $[0,L] \times [0,T]$. Integrating first with respect to $x$ using that $a(0)=b(0)$, we find that
$$
(a(x) - b(x)) (v_t(x,t))_x = 0.
$$
Since the coefficients are independent of time, we write this as
$$
[(a(x) - b(x)) (v_x(x,t))]_t = 0.
$$
Integrating with respect to time using $a(0)=b(0)$, and then putting $t = T$ we obtain
$$
(a(x) - b(x)) g_x(x) = 0
$$
for $0\leq x \leq L$. From the assumptions on $g$ we can conclude that $a(x)=b(x)$ also for
$0 < x < L$.

The final step in  the uniqueness argument is the proof of
unique identifiability of $f(x)$ in
\eqref{eq:model2}--\eqref{eq:ad-measurement}. Since $w=0$ and $a=b$, we have from (\ref{eq:model2.1_1}) that
$$
f_1(x) - f_2(x) = 0
$$
for every $x$ in $[0,L]$,
i.e. $f_1=f_2$ and the theorem is proved.
\end{proof}

We finish remarking that the main uniqueness argument is related to
the thermal diffusivity coefficient $a$. However, the arguments that
we have presented in this section appear not possible to generalize
to dimensions in space higher than one. This is due to the fact that
we would obtain an equation where the divergence of an element is
zero. However, we can not conclude that the given element would then
be a constant, as the simple example $(x,-y)$ shows.

\section{Conclusions and possible generalizations}

In this paper, we proved uniqueness for the inverse problem of
simultaneously identifying a spacewise heat conductivity and heat
source for a given final time measurement. The main result is based
on local Carleman estimates for parabolic problems
following~\cite{Yamamoto2009}. We did not strive for the most
general result, but only aimed at convincing the reader that there
can be at most one coefficient and source that satisfy a given final
time condition. Therefore, we assumed rather strong regularity on
the unknown parameters and worked with classical solutions. However,
these can be relaxed and it was indicated how to adjust the proof.
For conductivities and sources in spaces of integrable functions an
alternative proof of uniqueness in the one-dimensional case was
given, where many of the assumptions on smoothness were weakend.
Unfortunately, these arguments for the one-dimensional case can not
be generalized to dimensions higher than one. The one-dimensional
case does motivate the uniqueness of the inverse problem under less
regularity assumptions on the parameter spaces; to present such a
result in higher dimension is deferred to future work.

We remark that there are actually many strong results on Carleman
estimates for parabolic equations with very general assumptions on
the smoothness of the parameters \cite{Poisson2008,
Poisson_Carleman2008}. Moreover, we used local Carleman
estimates in order to simplify the proof of uniqueness in the higher-dimensional case. However, we may use global
Carleman estimates to avoid the assumption that the heat
conductivity is known close to the boundary of the body $\Omega$, see
\cite{KlibanovTimonov2004, Poisson_Carleman2008, Poisson2008,
Vessella2008, Yamamoto2009} for a general overview on the subject.

Therefore, the authors' conjecture is that there exists uniqueness of
the spacewise heat conductivity and heat source for a given final
time additional measurement, under weaker assumptions on the
parameter space. It will be addressed in a future works together with a regularization
method for the inverse problem.

\section*{Acknowledgments} A. De Cezaro is grateful for the support in the form
of a visiting fellowship obtained from the Brazil Visiting Fellows
Scheme at University of Birmingham, UK, offered in the summer of
2012 during which period this work were undertaken.

\appendix
\counterwithin{theo}{section} \numberwithin{equation}{section}
\section{Appendix}\label{appendix}

In this appendix, we show that the assumption (\ref{eq:assumption_Lemma_6.1}) on the final time
measurement \eqref{eq:ad-measurement} required in Section 2, can be satisfied, i.e. the
inverse problem \eqref{eq:model2}--\eqref{eq:ad-measurement} with (\ref{eq:assumption_Lemma_6.1}) imposed can have a solution for a
given initial data being sufficiently regular and with a localized heat
source.

The first result is on the regularity of the solution of
\eqref{eq:model2}. For simplicity,  we assume that all compatibility
conditions are satisfied and that $\Omega$ is an open, bounded and
regular subset of $\R^n$. We do not present a proof of the following
theorem, since it follows from standard regularity estimates for
parabolic equations. Moreover, since the coefficient and the source
are spacewise dependent, the solution is analytic in time. For details see,
for example, \cite{LM-vol1, LandSolUr-1968}.
\begin{theo}\label{th:regularity}
Let the coefficient and the source belong to the admissible set
$\mathcal{A}$. Then, for any $h \in H^k(\Omega)$, there exists a
unique solution $u\in C([0,T]; H^{k+1}(\Omega))\cap C((0,T);
H_0^1(\Omega) \cap H^{k+1}(\Omega)) \cap C^1((0,T);
H^{k+1}(\Omega))$ of the parabolic equation \eqref{eq:model2}.
\end{theo}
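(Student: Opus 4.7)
The plan is to frame the problem in the semigroup-theoretic setting. Since $a \in C^2(\overline{\Omega})$ with $a \geq \underline{a}>0$, the operator $Lu = -\nabla\!\cdot\!(a(x)\nabla u)$ with domain $D(L)=H^2(\Omega)\cap H^1_0(\Omega)$ is self-adjoint, positive and has compact resolvent in $L^2(\Omega)$. Hence $-L$ generates an analytic semigroup $\{e^{-tL}\}_{t\ge 0}$ on $L^2(\Omega)$. Existence of a solution then follows from the Duhamel representation
$$u(t) = e^{-tL}h + \int_0^t e^{-(t-s)L} f\,ds,$$
while uniqueness is obtained from the standard energy identity $\tfrac{d}{dt}\|u\|_{L^2}^2 + 2(Lu,u) = 2(f,u)$ applied to the difference of two candidate solutions. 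Analyticity in $t$ on $(0,T)$ is automatic from the analytic-semigroup calculus.

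Next I would upgrade the regularity in space. Because $L$ has $C^2$ coefficients, classical interior and boundary elliptic regularity yields the a priori estimate $\|u\|_{H^{j+2}(\Omega)} \le C(\|Lu\|_{H^j(\Omega)}+\|u\|_{L^2(\Omega)})$ for admissible $j$, together with preservation of the Dirichlet boundary trace. Splitting $u = \tilde u + v$, where $\tilde u$ is the stationary solution of $L\tilde u = f$ with $\tilde u|_{\partial\Omega}=0$ (which lies in $H^{k+1}(\Omega)\cap H^1_0(\Omega)$ by elliptic regularity since $f \in C^2(\overline\Omega)$), reduces the problem to the homogeneous equation $v_t + Lv = 0$ with initial value $v(0) = h-\tilde u \in H^k(\Omega)$. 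Applying fractional powers of $L$ and the parabolic smoothing bound $\|L^m e^{-tL}\|_{L^2 \to L^2}\lesssim t^{-m}$ promotes $v(t,\cdot)$ into $H^{k+1}(\Omega)$ for $t>0$; membership in $C([0,T];H^{k+1}(\Omega))$ at $t=0$ uses the strong continuity of $e^{-tL}$ on $D(L^{(k+1)/2})$ together with compatibility between $h$ and $f$ on $\partial\Omega$ (which is part of the admissibility assumption).

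For the time regularity I would exploit that $a$ and $f$ are $t$-independent: formally differentiating the equation shows that $w:=u_t$ solves $w_t + Lw = 0$ with initial datum $f - Lh$, so $w$ inherits the same spatial regularity as above by the previous step, giving $u\in C^1((0,T);H^{k+1}(\Omega))$. Iterating this bootstrap yields $u\in C^\infty((0,T);H^{k+1}(\Omega))$ and, together with the analyticity of the semigroup, the stated time-analyticity used elsewhere in the paper.

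The principal obstacle is the compatibility/regularity at $t=0$: continuity into $H^{k+1}(\Omega)$ up to $t=0$ requires not just $h \in H^k(\Omega)$ but the matching of $h$, $Lh$, $L^2 h-Lf$, \textit{etc.} with the boundary data in the sense of the domains of fractional powers of $L$, so some compatibility hypothesis is needed (or one simply records the conclusion on $(0,T]$ with $C([0,T];H^k(\Omega))$ up to the initial time). A secondary technical point is that iterated $H^{k+2}$ elliptic regularity eventually outruns the assumed $C^2$ smoothness of $a$, so for large $k$ one must either strengthen the admissibility condition on the coefficient or invoke the extra smoothing provided by the analytic semigroup on an intermediate scale (e.g.\ Besov or fractional Sobolev spaces) as in \cite{LandSolUr-1968, LM-vol1}.
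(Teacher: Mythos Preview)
Your outline is correct and is one of the standard ways to establish such parabolic regularity. There is, however, nothing to compare it against: the paper does not give a proof of this theorem at all. Immediately before the statement the authors write that they ``do not present a proof of the following theorem, since it follows from standard regularity estimates for parabolic equations'' and simply refer to \cite{LM-vol1, LandSolUr-1968}. Your semigroup/bootstrap sketch is precisely the kind of argument those references carry out, so in substance you are filling in what the paper deliberately omits.

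Two remarks on the caveats you raise. First, your concern about compatibility at $t=0$ is well placed; the paper handles it by fiat, stating just above the theorem that ``for simplicity, we assume that all compatibility conditions are satisfied.'' Second, your observation that iterated $H^{k+2}$ elliptic regularity eventually outruns the $C^2$ smoothness of $a$ in the admissible set $\mathcal{A}$ is also legitimate---the theorem as stated is slightly optimistic for large $k$ unless one tacitly strengthens the coefficient regularity, which is consistent with the paper's repeated disclaimer that it works with ``sufficiently smooth'' data and does not pursue optimal hypotheses.
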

Note that the regularity above is far from optimal.
\begin{lemma}\label{lemma:assumptions1}
Let $k > \max\{3, n/2\}$. Then the assumption about the finiteness
of the constant $C$ in~\eqref{eq:carleman1} and assumption
\eqref{eq:smoothness-g} are satisfied for the solution of
\eqref{eq:model2} with additional data \eqref{eq:ad-measurement}.
\end{lemma}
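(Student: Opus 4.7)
The plan is to combine Theorem~\ref{th:regularity}, which furnishes the required spatial regularity of the solution, with the classical Sobolev embedding theorem on the bounded, regular domain $\Omega\subset\R^n$. Since $(a,f)\in\mathcal{A}$ and $h\in H^k(\Omega)$ with $k>\max\{3,n/2\}$, that theorem gives
\[
u\in C([0,T];H^{k+1}(\Omega))\cap C^1((0,T);H^{k+1}(\Omega)).
\]
The hypothesis $k>\max\{3,n/2\}$ ensures (for integer $k$) that $k+1-n/2>3$, so the Sobolev embedding yields the continuous inclusion $H^{k+1}(\Omega)\hookrightarrow C^3(\overline{\Omega})$, and a fortiori $H^{k+1}(\Omega)\hookrightarrow C^2(\overline{\Omega})$.

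To verify assumption~\eqref{eq:smoothness-g}, I would apply the $C^3$-embedding to $g=u(\cdot,T)\in H^{k+1}(\Omega)$; evaluation at $t=T$ is legitimate because $u\in C([0,T];H^{k+1}(\Omega))$. Since $\overline{\Omega}$ is compact, the quantities $\|\nabla g\|_\infty$, $\|\triangle g\|_\infty$ and $\|\nabla(\triangle g)\|_\infty$ are all finite, which is exactly~\eqref{eq:smoothness-g}.

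For the finiteness of the constant $C=\|\nabla v_t\|_\infty^2+\|\triangle v_t\|_\infty^2$ appearing after~\eqref{est1}, I would apply the $C^2$-embedding pointwise in $t$ to $v_t(\cdot,t)\in H^{k+1}(\Omega)$; this makes $\nabla v_t(\cdot,t)$ and $\triangle v_t(\cdot,t)$ continuous on $\overline{\Omega}$ for each fixed $t\in(0,T)$. The main remaining obstacle is upgrading this pointwise-in-time control into a uniform bound over the whole cylinder $(0,T)\times\Omega$. This is handled by invoking the time-analyticity of $v$ already used in Section~\ref{section:local_uniqueness}: because the coefficient and source are time-independent, $v$ (and hence $v_t$) extends analytically in $t$ past the endpoint $T$, so $[0,T]$ sits in the interior of the regularity interval, and the map $t\mapsto\|v_t(\cdot,t)\|_{H^{k+1}(\Omega)}$ is bounded on $[0,T]$. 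Composing this uniform bound with the Sobolev embedding delivers the required $L^\infty((0,T)\times\Omega)$ estimates on $\nabla v_t$ and $\triangle v_t$, completing the verification.
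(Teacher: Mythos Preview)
Your proof follows the same strategy as the paper's---Theorem~\ref{th:regularity} combined with the Sobolev embedding into continuous functions---only with considerably more detail filled in, in particular the use of time-analyticity to upgrade pointwise-in-$t$ control to a uniform bound on $[0,T]$. One minor caveat: the implication $k>\max\{3,n/2\}\Rightarrow k+1-n/2>3$ fails for $n\ge 4$ (e.g.\ $n=4$, $k=4$ gives $k+1-n/2=3$), so the $C^3$ embedding you invoke requires a slightly stronger hypothesis on $k$; this is a defect shared with the paper's own terse proof and statement rather than a flaw in your approach.
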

\begin{proof}
This follows from Theorem~\ref{th:regularity} and the continuous
embedding of $H^k(\Omega)\cap H_0^1(\Omega)$ in
$C(\overline{\Omega})$, \cite{Ada75}.
\end{proof}
Now, we verify Assumption~\ref{eq:assumption_Lemma_6.1} in
Lemma~\ref{lemma:Yamamoto_6.1}. We remark that
Lemma~\ref{lemma:Yamamoto_6.1} is a corollary of \cite[Lemma
6.2]{Yamamoto2009} that reads as follows
\begin{pr}
Let the solution domain $\Omega$ be as above. Put $\zeta = a-b$. Consider the
first-order partial differential equation
\begin{align*}
(P_0 \zeta)(x) = \nabla \zeta(x) \cdot \nabla g(x) + \zeta(x)
\triangle g(x)
\end{align*}
where $g \in H^k(\Omega)$, for $k$ as in
Lemma~\ref{lemma:assumptions1}.

If the Carleman weight function $\varphi(x,T) \in
C^1(\overline{\Omega})$ satisfies
\begin{align}\label{eq:gradg}
\nabla g(x) \cdot \nabla \varphi(x,T) > 0 \,,\quad x \in
\overline{\Omega}\,,
\end{align}
then the Carleman estimate \eqref{eq:6.22} is satisfied.
\end{pr}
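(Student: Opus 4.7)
The statement is essentially a Carleman estimate for the first order operator $P_{0}\zeta = \nabla g \cdot \nabla\zeta + (\triangle g)\,\zeta$, with the transversality condition $\nabla g \cdot \nabla\varphi(\cdot,T) > 0$ playing the role of pseudo--convexity. The plan is to prove the estimate directly by a conjugation/integration-by-parts argument, and then upgrade it to an $H^{1}$-type estimate by commuting a spatial derivative through $P_{0}$ and absorbing lower-order terms for large $s$. I will use throughout the boundary condition $\zeta = \nabla\zeta = 0$ on $\partial\Omega$ that is inherited from Lemma~\ref{lemma:Yamamoto_6.1}, together with $g \in H^{k}(\Omega) \hookrightarrow C^{2}(\overline{\Omega})$ (by Lemma~\ref{lemma:assumptions1}), which ensures that $\nabla g$ and $\triangle g$ are bounded on $\overline{\Omega}$.

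\emph{Step 1: $L^{2}$-type estimate for $\zeta$.} Abbreviate $\vb = \nabla g$ and $c = \triangle g$, so $P_{0}\zeta = \vb\cdot\nabla\zeta + c\,\zeta$. Multiply $P_{0}\zeta$ by $s\zeta\, e^{2s\varphi(\cdot,T)}$ and integrate over $\Omega$. Using $2\zeta\nabla\zeta = \nabla(\zeta^{2})$ and integrating by parts (the boundary contribution vanishes since $\zeta|_{\partial\Omega}=0$), one obtains
\begin{align*}
\int_{\Omega} (P_{0}\zeta)\, s\zeta\, e^{2s\varphi(x,T)}\,dx
= -s^{2}\int_{\Omega} (\vb\cdot\nabla\varphi(\cdot,T))\,\zeta^{2}\, e^{2s\varphi(x,T)}\,dx
+ \mathcal{R},
\end{align*}
where $\mathcal{R}$ collects terms of order $s$ with $\zeta^{2}$ (involving $\nabla\cdot\vb$ and $c$). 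By hypothesis $\vb\cdot\nabla\varphi(\cdot,T) \geq \mu > 0$ on the compact set $\overline{\Omega}$, so Cauchy--Schwarz together with the absorption of the lower-order remainder $\mathcal{R}$ for $s$ large yields
\begin{align*}
s^{2}\int_{\Omega} |\zeta|^{2} e^{2s\varphi(x,T)}\,dx \leq C \int_{\Omega} |P_{0}\zeta|^{2} e^{2s\varphi(x,T)}\,dx.
\end{align*}

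\emph{Step 2: $H^{1}$-type estimate for $\zeta$.} Apply the differential identity
\begin{align*}
P_{0}(\partial_{i}\zeta) = \partial_{i}(P_{0}\zeta) - (\partial_{i}\vb)\cdot\nabla\zeta - (\partial_{i}c)\,\zeta,
\end{align*}
so that $\partial_{i}\zeta$ satisfies a first-order equation of the same type, with right-hand side bounded in $L^{2}(\Omega; e^{2s\varphi}\,dx)$ by $|\nabla(P_{0}\zeta)|^{2}$ plus zero-order terms. The boundary condition $\nabla\zeta|_{\partial\Omega}=0$ lets us repeat the integration-by-parts argument of Step~1 with $\partial_{i}\zeta$ in place of $\zeta$. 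Summing over $i$ and using $\|\partial_{i}\vb\|_{\infty} + \|\partial_{i}c\|_{\infty} < \infty$ from the Sobolev embedding, we get
\begin{align*}
s^{2}\int_{\Omega} |\nabla\zeta|^{2} e^{2s\varphi(x,T)}\,dx
\leq C \int_{\Omega} |\nabla(P_{0}\zeta)|^{2} e^{2s\varphi(x,T)}\,dx
+ C \int_{\Omega} \bigl(|\nabla\zeta|^{2} + |\zeta|^{2}\bigr) e^{2s\varphi(x,T)}\,dx.
\end{align*}

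\emph{Step 3: Combine and absorb.} Add the estimates of Steps~1 and~2. For $s$ sufficiently large the two $L^{2}$-type terms on the right are dominated by the $s^{2}$-scaled quantities on the left, and one arrives at \eqref{eq:6.22} with $P_{0}\zeta = \nabla\cdot((a-b)\nabla g)$. I expect the main technical point to be the careful bookkeeping of commutator terms in Step~2 and verification that the integration-by-parts boundary contributions vanish under the standing hypothesis $|\zeta|=|\nabla\zeta|=0$ on $\partial\Omega$; once those are handled the rest is the standard absorption argument for large $s$.
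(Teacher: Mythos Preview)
Your argument is correct. The paper itself does not prove this proposition; it is stated there as a restatement of \cite[Lemma~6.2]{Yamamoto2009}, and Lemma~\ref{lemma:Yamamoto_6.1} is derived from it as a corollary. Your direct conjugation/integration-by-parts derivation is precisely the standard proof of a first-order Carleman estimate under the transversality condition $\nabla g\cdot\nabla\varphi(\cdot,T)\geq\mu>0$, and it is essentially what the cited reference does. The only point worth tightening is the regularity bookkeeping in Step~2: bounding $\partial_i c=\partial_i(\triangle g)$ in $L^\infty$ requires slightly more than the bare Sobolev embedding from $H^k$ with $k>\max\{3,n/2\}$; in the paper this is covered separately by assumption~\eqref{eq:smoothness-g}, so you may simply invoke that rather than the embedding. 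Otherwise the absorption for large $s$ and the use of $\zeta=\nabla\zeta=0$ on $\partial\Omega$ to kill the boundary terms are exactly right.
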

Therefore, we only need to prove that there exists a Carleman weight
function $\varphi$, a set of initial data and a heat source such
that \eqref{eq:gradg} is satisfied. However, by construction of the
Carleman weight function, we have
$$ \nabla \varphi(x,T) = \nabla d(x) e^{\lambda \beta(x,T)}\,.$$
Hence, is enough to guarantee that there exist $d(x)$ such that
\begin{align}\label{eq:gradg1}
\nabla g(x) \cdot \nabla d(x) > 0 \,,\quad x \in
\overline{\Omega}\,.
\end{align}
It is verified in the following lemma.
\begin{lemma}
Let $\varepsilon > 0$ and let $\mathcal{O}$ be any open set of
$\Omega$. There exists $d \in C(\overline{\Omega})$ with
$d|_{\partial \Omega} = 0$ and $|\nabla d(x)| \geq \varepsilon$ for
$x\in \Omega$, such that for any $h \in L^2(\Omega)$ there is a
sufficiently smooth source term $f$, having support in $\mathcal{O}$,
with
\begin{align}\label{eq:l-infty}
 \norm{\nabla u(f,h)(T) - \nabla d}_{L^\infty(\Omega)} <
\varepsilon/\sqrt{2}\,,
\end{align}
where $u(f,h)$ is the solution of \eqref{eq:model2} with source $f$
and initial data $h$.
Moreover, \eqref{eq:gradg1} holds.
\end{lemma}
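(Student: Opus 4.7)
My plan is to reduce the lemma to an approximate controllability result for \eqref{eq:model2} with a source localized in $\mathcal{O}$, and then deduce \eqref{eq:gradg1} by an elementary pointwise inequality.

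First I would pick $d$ as in the example following \eqref{eq:weight_functions}: take $d(x)=-\gamma x_1-|x'|^2$ with $\gamma\ge \varepsilon$ (using the local flattening of $\partial\Omega$ from Section~\ref{section:local_uniqueness}), so that $|\nabla d(x)|^2=\gamma^2+4|x'|^2\ge \varepsilon^2$ on $\Omega$. The normalization $d|_{\partial \Omega}=0$ is enforced by adding a lower-order correction supported near $\partial\Omega$ in the normal direction, chosen small compared with $\gamma$ so that the lower bound on the gradient is preserved.

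The core step is the approximation. By linearity of \eqref{eq:model2}, $u(f,h)=u(0,h)+u(f,0)$. Fixing $h\in L^2(\Omega)$ and setting $G_h:=\nabla d-\nabla u(0,h)(\cdot,T)$, it suffices to find $f\in C_0^\infty(\mathcal{O})$ with $\norm[L^\infty(\Omega)]{\nabla u(f,0)(\cdot,T)-G_h}<\varepsilon/\sqrt{2}$. Since the coefficient and source are time-independent, $u(f,0)(\cdot,T)=\int_0^T e^{-\sigma L}f\,d\sigma$, and I claim the map $f\mapsto u(f,0)(\cdot,T)$ has range dense in $L^2(\Omega)$ when $f$ runs over $C_0^\infty(\mathcal{O})$. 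This is approximate controllability with internally distributed controls; by Hahn--Banach it reduces to the unique continuation property that any solution $\psi$ of the backward adjoint heat equation vanishing on $\mathcal{O}\times(0,T)$ vanishes identically, a classical result for parabolic equations with smooth coefficients (compare Lemma~\ref{lemma:prop-adj} and the references in~\cite{Isa06, Yamamoto2009}). Because $f\in C_0^\infty(\mathcal{O})$ and the parabolic semigroup is smoothing, $u(f,0)(\cdot,T)$ is analytic in $\Omega$; combined with Sobolev embedding and elliptic/parabolic a priori estimates, the $L^2$-density lifts to density in $C^1(\overline{\Omega})$, which is exactly what is needed.

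Finally, with $g=u(f,h)(\cdot,T)$ so obtained, at any $x\in\Omega$
\[
\nabla g(x)\cdot\nabla d(x)=|\nabla d(x)|^2+(\nabla g(x)-\nabla d(x))\cdot\nabla d(x)\ge |\nabla d(x)|\bigl(|\nabla d(x)|-\varepsilon/\sqrt{2}\bigr)\ge \varepsilon^2\bigl(1-1/\sqrt{2}\bigr)>0,
\]
which is precisely \eqref{eq:gradg1}, extending to $\overline{\Omega}$ by continuity. The main obstacle is the upgrade from the standard $L^2$-approximate controllability to a $C^1$-approximation on $\overline{\Omega}$ at the single time slice $t=T$; all other ingredients are either algebraic (the final pointwise estimate) or direct computations with an explicitly chosen $d$.
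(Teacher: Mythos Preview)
Your approach is essentially the paper's: pick a Carleman-type weight function $d$, invoke an approximate-controllability/density result to manufacture $f$ so that $\nabla u(f,h)(\cdot,T)$ is $L^\infty$-close to $\nabla d$, then finish with a pointwise Cauchy--Schwarz estimate. The paper's final computation expands $|\nabla g-\nabla d|^2<\varepsilon^2/2$ to obtain $2\,\nabla g\cdot\nabla d>|\nabla g|^2+|\nabla d|^2-\varepsilon^2/2\ge \varepsilon^2/2$, which is equivalent to (and in fact slightly sharper than) your bound $\nabla g\cdot\nabla d\ge \varepsilon^2(1-1/\sqrt{2})$.

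Where the paper differs is that it does \emph{not} attempt either construction explicitly: it cites \cite[Section~2]{Poisson2008} for the existence of $d\in C(\overline{\Omega})$ with $d|_{\partial\Omega}=0$ and $|\nabla d|\ge\varepsilon$, and it cites \cite[Proposition~1.1]{Yuan_Yamamoto2009} (see also \cite[Corollary~3.1]{Poisson2008}) for the density statement yielding \eqref{eq:l-infty} directly in the required norm. Your sketch is more self-contained, which is nice, but it leaves two points unfinished. First, the explicit $d(x)=-\gamma x_1-|x'|^2$ from the Example does not vanish on a general $\partial\Omega$, and ``adding a lower-order correction supported near $\partial\Omega$'' is not yet a construction; the standard route is a Morse-type argument on the distance function, which is precisely what the cited reference supplies. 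Second, and more seriously, the step you flag as ``the main obstacle'' is a genuine gap: parabolic smoothing tells you each $u(f,0)(\cdot,T)$ lies in $C^1(\overline{\Omega})$, but $L^2$-density of a set of smooth functions does not by itself imply $C^1$-density of that same set. One needs a density statement in a Sobolev norm strong enough to embed into $C^1(\overline{\Omega})$, which is exactly what the paper imports from \cite{Yuan_Yamamoto2009}. Without that input (or an independent argument producing it), your proof is incomplete at the same point the paper chooses to cite out.
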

\begin{proof}
Since we do not use global Carleman estimates, we can consider
$d$ to be zero on $\partial \Omega$. For the existence of such a function
$d$, see for example \cite[Section 2]{Poisson2008}. The density
argument then follows from \cite[Proposition 1.1]{Yuan_Yamamoto2009}, see
also \cite[Corollary 3.1]{Poisson2008}.

Hence, from $g(x) = u(f,h)(T)$, the regularity of $g(x)$ and $d(x)$
and the estimate~\eqref{eq:l-infty}, we have
$$
\varepsilon^2/2 > (ess\sup|\nabla g(x) - \nabla d(x)|)^2  \geq
|\nabla g(x) - \nabla d(x)|^2 = |\nabla g(x)|^2 - 2 \nabla g(x)
\cdot \nabla d(x) + |\nabla d(x)|^2 \,.$$
Therefore, since $|\nabla d| \geq \varepsilon$ it follows that
\begin{align*}
2 \nabla g(x) \cdot \nabla d(x) & > |\nabla g(x)|^2 + |\nabla
d(x)|^2 - \varepsilon^2/2   \geq |\nabla d(x)|^2 - \varepsilon^2/2
\geq \varepsilon^2/2\,.
\end{align*}
\end{proof}

The condition \eqref{eq:gradg1} can then be stated as
(\ref{eq:assumption_Lemma_6.1}) via a suitable transformation, see
the proof of \cite[Lemma 6.1]{Yamamoto2009}. Thus, we have shown
that there are conductivities and sources that can generate a final
time value such that (\ref{eq:assumption_Lemma_6.1}) holds, i.e. the
inverse problem \eqref{eq:model2}--\eqref{eq:ad-measurement} with
(\ref{eq:assumption_Lemma_6.1}) imposed will have a solution for
some data (and this solution is unique). Again, we have not
investigated optimal conditions and there might be other
restrictions on the final data that generate uniqueness. Note that
the condition (\ref{eq:assumption_Lemma_6.1}) is similar to the
condition imposed in the one- dimensional case in Section~3.

\bibliographystyle{amsplain}
\bibliography{uniqueness_tomas}

\end{document}